\definecolor{darkmagenta}{rgb}{0.5, 0, 0.5}
\definecolor{darkblue}{rgb}{0.1, 0.1, 0.7}
\definecolor{darkgreen}{rgb}{0.1, 0.35, 0.1}
 \newtheorem{thee}{Theorem}
 \newtheorem{coor}[thee]{Corollary}
 \newtheorem{leem}[thee]{Lemma}
 \newtheorem{prro}[thee]{Proposition}
 \newtheorem{exxe}[thee]{Example}
 \newtheorem{reem}[thee]{Remark}
 \newcommand{\balf}
 {\renewcommand{\theenumi}{(\alph{enumi})}
 \renewcommand{\labelenumi}{\theenumi}
                      \begin{enumerate}}
\newcommand{\ealf}   {\end{enumerate}
                      \renewcommand{\theenumi}{\arabic{enumi}}
                      \renewcommand{\labelenumi}{\theenumi.}}
\newcommand{\bara}   {\renewcommand{\theenumi}{(\arabic{enumi})}
                      \renewcommand{\labelenumi}{\theenumi}
                      \begin{enumerate} }
\newcommand{\eara}   {\end{enumerate}
                      \renewcommand{\theenumi}{\arabic{enumi}}
                      \renewcommand{\labelenumi}{\theenumi.}}
 \newcommand{\brom}   {\renewcommand{\theenumi}{(\roman{enumi})}
                      \renewcommand{\labelenumi}{\theenumi}
                      \begin{enumerate} }
\newcommand{\erom}   {\end{enumerate}
                      \renewcommand{\theenumi}{\arabic{enumi}}
                      \renewcommand{\labelenumi}{\theenumi.}}
	 \newcommand{\rc}{\color{red}}
	 \newcommand{\ec}{\color{black}}
	  \newcommand{\Na}{\mbox{\rm Na}}
	   \newcommand{\Max}{\mbox{\rm Max}}
     \newcommand{\f}{\boldsymbol{f}}   
       \newcommand{\F}{\boldsymbol{F}}   
	     \newcommand{\starf} {\star_{_{\! f}}} 
	      \newcommand{\start} {\widetilde{\star}} 
	       \newcommand{\stara} {\star_{_{\! a}}} 
	        \newcommand{\astf} {\ast_{_{\! f}}} 
	         \newcommand{\astt} {\widetilde{\ast}} 
	           \newcommand{\QMax}{\mbox{\rm QMax}}   
  \newcommand{\co} {\boldsymbol{c}} 
  \DeclareMathOperator{\calS}     {\mathcal S}%
  \DeclareMathOperator{\calbV} {\boldsymbol{\mathcal V}}%
  \DeclareMathOperator{\calbW}     {\boldsymbol{\mathcal W}}%
  \DeclareMathOperator{\calJ} {\mathcal J}%
\begin{document}

 \title[]{Cancellation properties in ideal systems:\\ A classification of $\boldsymbol{e.a.b.}$ semistar operations}

 \author{Marco Fontana and K. Alan Loper}
 
 \dedicatory{\sl to Paulo Ribenboim on the occasion of his 80th birthday}

\thanks{\it  Acknowledgments. \rm 
During the preparation of this paper, the first named author
was partially supported by  a research grant PRIN-MiUR}

\address{M.F.: \ Dipartimento di Matematica, Universit\`a degli Studi
``Roma Tre'', 00146 Rome, Italy.}
\email{fontana@mat.uniroma3.it }
\address{A.L.: \ Department of Mathematics, Ohio State University, Newark, Ohio 43055, USA.}
\email{lopera@math.ohio-state.edu}

\date{\today} 
 
 \subjclass[2000]{13A15, 13G05, 13F30, 13E99} 
 \keywords{Multiplicative ideal theory, cancellation ideal, star operation, $v$--operation,  valuation domain,   Pr\"ufer ($v$--multiplication) domain, (almost) Dedekind domain}

 \maketitle

 \begin{abstract} We give a classification of {\texttt{e.a.b.}} semistar  (and star)   operations by defining four different (successively smaller) distinguished classes. Then, using a standard notion of equivalence of semistar (and star) operations to partition the collection of all {\texttt{e.a.b.}}  semistar  (or star) operations,  we show that there is exactly one operation of finite type in each equivalence class and that this operation has a range of nice properties.  We give examples to demonstrate that the four classes of {\texttt{e.a.b.}} semistar   (or star)  operations   we defined can all be distinct. In particular, we solve the open problem of showing that  {\texttt{a.b.}}   is really a stronger condition than {\texttt{e.a.b.}}   \end{abstract}

 
 \section{Introduction}
 
 
 In the classical (Krull's) setting, the study of Kronecker function rings on an integral domain generally focusses on the collection of  {\texttt{a.b.}} ({\sl = arithmetisch brauchbar})  star operations on the domain.   Gilmer's presentation of star operations 
 \cite[Section 32]{G} covers  the class of {\texttt{a.b.}}   star operations and also the (presumably  larger  class of)  {\texttt{e.a.b.}}   ({\sl = endlich arithmetisch brauchbar})  star operations   (the definitions are recalled in the following section).  
 
 This paper began with an attempt to   clarify the relation between 
 the {\texttt{e.a.b.}} and {\texttt{a.b.}} conditions and trying to solve the open problem of showing that  {\texttt{a.b.}}   is really a stronger condition than {\texttt{e.a.b.}}    In Section 2 of the paper we give some general background and prove some elementary results concerning star operations (and the more general concept of semistar operations) and the related issue of cancellation properties of ideals  (since the {\texttt{e.a.b.}}   condition is essentially a cancellation property).  We also 
expand our goal and define four different (successively smaller) classes of {\texttt{e.a.b.}} semistar  (and star)   operations.  Given two {\texttt{e.a.b.}} semistar operations, we say that they are equivalent if they agree on the class of all finitely generated ideals.  Using this notion of equivalence to partition the collection of all {\texttt{e.a.b.}}  semistar  (or star) operations,   we show that there is exactly one operation of finite type in each equivalence class and that this operation has a range of nice properties.  These operations of finite type constitute the smallest of our four classes.  Then, in Section 3, we give examples to demonstrate that the four classes of semistar  (or star)  operations  we defined can all be distinct, including the motivating example of a star operation that is {\texttt{e.a.b.}}  but not  {\texttt{a.b.}}  Then, in a brief final section, we approach the question of generalizing the results beyond the scope of {\texttt{e.a.b.}}  operations.  In particular, we note that for general star or semistar operations, an operation of finite type may not have the various nice properties that an  {\texttt{e.a.b.}}  operation of finite type has.  We suggest an alternative construction to the standard finite-type construction which agrees with the finite-type construction in the {\texttt{e.a.b.}}  case and does appear to give results similar to our {\texttt{e.a.b.}}  results in the general setting.  This generalization is based on recent results from \cite{FL:2007}.

\section{Classification of  ${e.a.b.}$ semistar operations and Cancellation properties}


Let $D$ be an integral domain with quotient field $K$. 
Let  $\boldsymbol{{F}}(D) $   [respectively,  $\boldsymbol{{f}}(D)$] be 
the set of all nonzero  fractionary ideals [respectively, nonzero finitely 
generated fractionary ideals] of $D$.  Let   $\,\boldsymbol{\overline{F}}(D)\,$  
represent the set of all nonzero $\,D$--submodules of $\,K\,$ (thus,
${\boldsymbol{f}}(D) \subseteq {\boldsymbol{F}}(D) \subseteq \boldsymbol{\overline{F}}(D)$).

 W. Krull  introduced the concept of a star operation in 1936 in his first Beitr\"age paper \cite{Krull:1936} (or \cite{Krull}).  In  1994, {Okabe} and 
{Matsuda} introduced the more
``flexible'' notion of semistar operation $\,\star\,$ of an integral
domain $\,D\,,$ \ as a natural generalization of the notion of star
operation, allowing $\, D \neq D^\star$.

A mapping   $\,\star : 
\boldsymbol{\overline{F}}(D) \rightarrow
\boldsymbol{\overline{F}}(D)\,$,  $\,E \mapsto E^\star,$
is called {\it a semistar operation of $\,D\,$} \rm 
if, for
all $\,z \in K\,$, $\,z \not = 0\,$ and for all $\,E,F \in
\boldsymbol{\overline{F}}(D)\,$, the following properties hold: $\mathbf{ \bf (\star_1)} \;(zE)^\star = 
zE^\star \,; $ \,
  $\mathbf{ \bf (\star_2)} \;   E \subseteq F 
\;\Rightarrow\; E^\star
\subseteq
F^\star \,;$ \,
$\mathbf{ \bf (\star_3)}\;    E \subseteq 
E^\star 
 \textrm {  and  } E^{\star \star} := (E^\star)^\star = E^\star\,. $

\smallskip

When $D^\star=D$, the map $\star$, restricted to $\boldsymbol{F}(D)$, defines  
a star
operation\!\! 
\footnote{More explicitly,  a \emph{star operation $\ast$ of an integral domain $D$} is a mapping $\,\ast : 
\boldsymbol{{F}}(D) \rightarrow
\boldsymbol{{F}}(D)\,$,  $\,E \mapsto E^\ast\,$ such that the following properties hold: $\mathbf{ \bf (\ast_1)}$ $(zD)^\ast = z D$ and $(zE)^\ast = zE^\ast$,\  $\mathbf{ \bf (\ast_2)}$    $E \subseteq F \Rightarrow E^\ast
\subseteq
F^\ast$, \ 
 $\mathbf{ \bf (\ast_3)}$  $E \subseteq 
E^\ast 
 \textrm {   and  
}  E^{\ast \ast} := (E^\ast)^\ast = E^\ast\,,$
 for
all nonzero $\,z \in K\,$, and for all $\,E,F \in
\boldsymbol{{F}}(D)\,$. }
of $D$ \cite[Section 32]{G};   in this situation, we say that $\star$ is a \emph{(semi)star operation of} $D$.  A \emph{proper semistar operation of $D$} is a semistar operation $\star$ of $D$ such that $D \subsetneq D^\star$.   

 \rm  For several examples we construct, we use results that were proven for star operations rather than semistar.   However, if $\ast$ is a star operation on an integral domain $D$ (hence, defined only on $\boldsymbol{F}(D)$),  we can extend it trivially to a semistar   (in fact, (semi)star) operation of $D$, denoted $\ast_e$, by defining $E^{\ast_e}$ to be the quotient field of $D$ whenever 
$E \in \boldsymbol{\overline{F}}(D) \setminus \boldsymbol{F}(D)$.  Hence, our star operation examples can be considered to be semistar examples as well.  

\smallskip

As in the classical star-operation setting, we associate to a \sl 
semistar \rm  ope\-ra\-tion $\,\star\,$ of $D$ a new semistar 
operation
{$\,\star_{_{\! f}}\,$} of $D$ as follows.   If $\,E \in
\boldsymbol{\overline{F}}(D)\,$ we set:
$$
E^{\star_{_{\! f}}} := \bigcup \{F^\star \;|\;\, F \subseteq E,\, F \in
\boldsymbol{f}(D)
\}\,.
$$
We call {$\, {\star_{_{\! f}} }\,$ \it the 
semistar
operation of finite type of $D$ \rm associated to $\,\star\,$}.
If $\,\star =
\star_{_{\! f}}\,$,\ we say that $\,\star\,$ is {\it a 
semistar ope\-ra\-tion of finite type on $D$.  \rm {\rc Given two semistar operations $\star'$ and $\star''$ of $D$, we say that $\star' \leq \star''$ if $E^{\star'} \subseteq E^{\star''}$ for all $E \in
\boldsymbol{\overline{F}}(D)$}. Note that ${\star_{_{\! f}} } \leq \star$
 and
$\,(\star_{_{\! f}})_{_{\! f}} = \star_{_{\! f}}\,$, \ so $\,\star_{_{\! f}}\,$ is a semistar operation
of finite type of $\,D\,.$ 

 If $\star$ coincides with the  semistar \emph{$v$--operation of $D$}, defined by $E^{v} : = (D:(D:E))$, for each 
   $E \in
\boldsymbol{\overline{F}}(D)$, then  $v_f$ is denoted by $t$. Note that $v$ [respectively, $t$] restricted to $\boldsymbol{{F}}(D)$ coincides with the classical {\sl star} $v$--operation [respectively, $t$--operation]  of $D$.

Let 
     $\star$ be a semistar operation on $D$. If $F$ is in $\boldsymbol{f}(D)$, we say that 
     $F$ is 
            \it  $\star$--\texttt{eab} \rm [respectively,  \it  
             $\star$--\texttt{ab}\rm]
  if
$(FG)^{\star}
             \subseteq (FH)^{\star}$ implies that $G^{\star}\subseteq H^{\star} $, with $G,\ H \in 
\boldsymbol{f}(D)$, [respectively,  with $G,\ H \in 
\overline {\boldsymbol{F}}(D)$].
       
      The operation $\star$ is said to be   \it \texttt{eab} \rm [respectively, \it \texttt{ab}\rm\ \!]  if each $F\in \boldsymbol{f}(D)$ is $\star$--\texttt{eab}  [respectively, $\star$--\texttt{ab}]. \  An \texttt{ab} operation is obviously an \texttt{eab} operation. 
      
        \begin{reem} \rm    W. Krull,  in \cite{Krull:1936}, only considered the concept of 
 ``{\bf\texttt{a}}rithmetisch {\bf\texttt{b}}rauchbar'' (for short, \texttt{a.b.} or, simply \texttt{ab} as above) $\star$--operation  (more precisely,  Krull's original notation was  ``\ $^{\prime}$--Operation'', instead of  ``$\star$--operation''). He did not consider the concept of    ``{\bf\texttt{e}}ndlich {\bf\texttt{a}}rithmetisch {\bf\texttt{b}}rauchbar''   $\star$--operation.  
 
The \texttt{e.a.b.} (or, more simply, \texttt{eab} as above)   concept stems from the original version of
Gilmer's book \cite{G68}. The results of Section 26 in \cite{G68}  show that this 
(presumably) weaker
concept is all that one needs to develop a complete theory of Kronecker
function rings. 
       Robert Gilmer  explained to us that  \   \ $\ll$ I believe I was influenced to recognize this because
during the 1966 calendar year in our graduate algebra seminar (Bill
Heinzer, Jimmy Arnold, and Jim Brewer, among others, were in that
seminar) we had covered Bourbaki's Chapitres 5 and 7 of 
\it Alg\`ebre Commutative\rm , and the development in Chapter 7 on the $v$--operation indicated
that\texttt{ e.a.b.} would be sufficient.$\gg$
\end{reem}

\begin{reem} \label{rk:2}
\bf (1) \rm When $\star$ coincides with the identity star operation $d$  on the integral domain $D$,  the notion of   $d$--\texttt{eab} 
[respectively,  $d $--\texttt{ab}], 
\sl for finitely generated ideals, \rm  coincides with the notion of 
\emph{quasi--cancellation ideal} [respectively, \emph{cancellation ideal}] studied by  D.D. Anderson and D.F. Anderson 
\cite{AA} (cf. also \cite{G65}).

As a matter of fact, a nonzero ideal $I$  (not  necessarily 
finitely generated)  of an integral domain 
$D$ is called a \it cancellation  \rm[respectively, \it quasi--cancellation\rm] 
\it ideal of $D$ \rm if $(IJ:I) = J$, for each nonzero  ideal $J$ 
of $D$ [respectively, if $(IF:I) = F$, for each nonzero finitely generated ideal $F$ 
of $D$].\\
 Obviously, a cancellation ideal is a quasi--cancellation 
ideal, but in general (for non finitely generated ideals)  the converse does not hold (e.g., a maximal 
ideal of a nondiscrete rank one valuation domain, \cite{AA}).

   For a  finitely generated ideal, the notion of  cancellation 
ideal coincides with the notion of quasi--cancellation ideal 
\cite[Corollary 1]{AA} (thus, in particular, the identity operation $d$ is \texttt{eab} if and only if $d$ is \texttt{ab} and this happens if and only if $D$ is a Pr\"ufer domain (cf. also the following part \bf (4)\rm). More precisely, by \cite[Lemma 1 and Theorem 1]{AA}  we have:

\noindent \it If $I$
is a nonzero \rm finitely generated 
ideal \it of an integral domain $D$, then the following conditions are equivalent:
\begin{enumerate}
\item[]
{
\begin{enumerate}
    \bf \item [(i)] \it  $I$ is a quasi--cancellation ideal of $D$;
    
    \bf \item [(ii)] \it $IG
             \subseteq IH$, with $G$ and $H$ nonzero 
             finitely generated ideals of $D$, implies
             that $G\subseteq H$;
             
             \bf \item [(iii)] \it $IG
             \subseteq IH$, with $G$ and $H$ nonzero 
              ideals of $D$, implies
             that $G\subseteq H$;

             \bf \item [(iv)] \it $I$ is a cancellation ideal of $D$;
             
            \bf \item [(v)] \it for each prime [maximal] ideal $Q$ of $D$, 
             $ID_{Q}$ is an invertible ideal of $D_{Q}$;

    \bf \item [(vi)] \it $I$ is an invertible ideal of $D$.
    
       \end{enumerate} 
       }
       \end{enumerate}\rm 
       
     \ec~Note that the definitions of quasi-cancellation and cancellation ideal   can be  extended  in a natural way to the case of fractional ideals and, {\sl mutatis mutandis}, the previous equivalent conditions hold for  fractional ideals.

 \bf (2) \rm The notion of   quasi-cancellation ideal   was introduced in \cite{AA}, in relation to the fact that in \cite[Exercise 4, page 66]{G} it was erroneously stated that a nonzero ideal $I$ of  an integral domain $D$ is a cancellation ideal if and only if $(IF:I)=F$, for each finitely generated ideal $F$ of $D$ (see the  counter-example mentioned in part \bf (1)\rm).

\bf (3) \rm  Kaplansky,  in an unpublished set of notes \cite[Exercise 7, page 67]{G}, proved a result that, in the integral domain case, affirms that   \it a nonzero finitely generated ideal $I$ of a local integral domain $D$ is a cancellation ideal if and only if  $I$ is principal.  \rm Therefore, the equivalence (\bf (iv)$\Leftrightarrow$(v)\rm) in part \bf (1) \rm is a ``globalization'' of Kaplansky's result.
Note also that Kaplansky observed that, if $I, G$ and $H$ are nonzero  ideals of an integral domain $D$ with $IG\subseteq IH$ and if $I$ is finitely generated ideal, generated by $n$ elements, then necessarily $G^n \subseteq H$ \cite [Theorem 254]{Ka:1971}.

\bf (4) \rm  Recall that  Jaffard   \cite{J:1960} proved that  
 \it  for each   ideal $I \in \boldsymbol{f} (D)$, $I$ is a (quasi--)cancellation ideal  if and only if $D$ is a Pr\"ufer domain \rm   
(cf. also  Jensen   \cite[Theorem 5]{Jensen:1963};  in that paper   Jensen   \cite[Theorem 6]{Jensen:1963} proved also that   
\it  for each  ideal $I \in \boldsymbol{F} (D)$, $I$ is a cancellation ideal    if and only if $D$ is an almost Dedekind domain). \rm
Recall also that, by \cite[Theorem 7]{AA}, 
 \it $I$ is a quasi--cancellation ideal, for each $I \in \boldsymbol{F} (D)$, if and only if $D$ is a completely integrally closed Pr\"ufer domain.  \rm

\bf (5) \rm Note that, \it  when $D$ is  a Pr\"ufer domain\rm , it is known \cite[Theorem 2 and Theorem 5]{AA} that:
\begin{enumerate}

\item[\bf (5, a)\rm] \it $I \in \boldsymbol{F} (D)$ is a quasi--cancellation ideal $\Leftrightarrow$ $(I:I) =D$. \rm 
 
\item[\bf (5, b)\rm] \it $I \in \boldsymbol{F} (D)$ is a cancellation ideal $\Leftrightarrow$ $ID_M$ is principal for each $M$ maximal ideal of $D$. \rm
\end{enumerate} 

 \noindent D.D. Anderson and Roitman  \cite[Theorem]{AR:1997} extended \bf (5, b) \rm outside of the (Pr\"ufer) domain case and  proved that,  \it  given a nonzero ideal [respectively, a regular ideal]  $I$ of an integral domain [respectively,  a ring] $R$, then 
 $I$  is a cancellation ideal of $R$ if and only if $IR_M$ is a   principal [respectively, principal regular]   ideal of $R_M$,    for each  maximal ideal $M$ of $R$. \rm   
 
Note that the previous statement was ``extended''  further    to   submodules of the quotient field of an integral domain $D$ by  Goeters and Olberding  \cite{GO:2000}.
 Let $E \in  \overline{\boldsymbol{F}}(D)$, $E$ is called  \it a cancellation module for $D$  \rm  if, for $G, H \in \overline{\boldsymbol{F}}(D)$, 
 $EG = EH$ implies that $G =  H$. 
Then, by \cite[Theorem 2.3]{GO:2000},  \it $E$ is a cancellation module for $D$ if and only if   $ED_M$ is principal,   for each $M \in \mbox{\rm Max}(D)$, or, equivalently, if and only if $ED_M$ is a cancellation  module for $D_M$,   for each $M \in \mbox{\rm Max}(D)$.  \rm   
  
\end{reem}

\medskip

  We  note  that if $\star$ is an \texttt{eab} semistar operation then $\, {\star_{_{\! f}} }\,$ is also an \texttt{eab} semistar operation, since they agree on all finitely generated ideals.  
The following easy result generalizes the fact, already observed in Remark \ref{rk:2}(1),  that the identity semistar operation   $d$ is \texttt{eab}  if and only if   it    is \texttt{ab}. 

%

  \begin{leem} \label{le:2}   Let $\star$ be a semistar operation of finite type, then
$ \star$  is an \texttt{eab} semistar operation if and only if  $\star$ is an \texttt{ab}  semistar operation.
\rm \end{leem}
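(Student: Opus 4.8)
The plan is to prove the nontrivial implication, that an \texttt{eab} operation of finite type is automatically \texttt{ab}; the reverse implication is immediate, since the \texttt{ab} condition quantifies over the larger class $\boldsymbol{\overline{F}}(D)$ and is therefore formally stronger. So I would fix $F \in \boldsymbol{f}(D)$ and take $G, H \in \boldsymbol{\overline{F}}(D)$ with $(FG)^\star \subseteq (FH)^\star$; the goal is to deduce $G^\star \subseteq H^\star$, which shows that $F$ is $\star$--\texttt{ab}, and hence (as $F$ was arbitrary) that $\star$ is \texttt{ab}.

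First I would use that $\star = \starf$ to reduce to the finitely generated case: since $G^\star = \bigcup \{ G_0^\star \mid G_0 \subseteq G,\ G_0 \in \boldsymbol{f}(D)\}$, it suffices to show $G_0^\star \subseteq H^\star$ for every finitely generated $G_0 \subseteq G$. Fixing such a $G_0$, the product $FG_0$ again lies in $\boldsymbol{f}(D)$ and satisfies $(FG_0)^\star \subseteq (FG)^\star \subseteq (FH)^\star$.

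The heart of the argument is to replace $H$ by a suitable finitely generated subideal so that the \texttt{eab} hypothesis can be applied. Writing $FG_0 = (x_1,\dots,x_n)$ and invoking finite type once more on the target, each $x_i$ lies in $(FH)^{\starf}$, hence in $J_i^\star$ for some finitely generated $J_i \subseteq FH$; setting $J := J_1 + \dots + J_n \in \boldsymbol{f}(D)$ gives $FG_0 \subseteq J^\star$ and therefore $(FG_0)^\star \subseteq J^\star$. Now each of the finitely many generators of $J$ lies in $FH$, so is a finite $D$--combination $\sum_k f_k h_k$ with $f_k \in F$ and $h_k \in H$; letting $H_0 \in \boldsymbol{f}(D)$ be generated by the (finitely many) elements $h_k$ that occur, one has $H_0 \subseteq H$ and $J \subseteq FH_0$, whence $(FG_0)^\star \subseteq J^\star \subseteq (FH_0)^\star$. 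At this point both $G_0$ and $H_0$ are finitely generated, so the $\star$--\texttt{eab} property of $F$ yields $G_0^\star \subseteq H_0^\star \subseteq H^\star$. Taking the union over all finitely generated $G_0 \subseteq G$ then gives $G^\star \subseteq H^\star$, as required.

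I expect the main obstacle to be the two applications of the finite-type hypothesis, and in particular the passage from $J \subseteq FH$ to $J \subseteq FH_0$ with $H_0$ finitely generated: one must observe that a finitely generated ideal sitting inside the (possibly infinitely generated) product $FH$ already sits inside $FH_0$ for a finitely generated $H_0 \subseteq H$, obtained by clearing the finitely many $H$--components appearing in the expansions of the generators of $J$. Everything else is bookkeeping, though one should check that the ideals $J$ and $H_0$ produced are genuinely nonzero members of $\boldsymbol{f}(D)$, which follows since $FG_0 \neq 0$.
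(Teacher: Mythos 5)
Your proof is correct and follows essentially the same route as the paper's: both use the finite-type hypothesis to replace $G$ and $H$ by finitely generated subideals and then apply the \texttt{eab} hypothesis to these truncations before taking unions, and your explicit passage from $J \subseteq FH$ to $J \subseteq FH_0$ with $H_0 \in \boldsymbol{f}(D)$ is exactly the observation underlying the paper's identity $(FH)^\star = \bigcup\{(FH_0)^\star \mid H_0 \in \boldsymbol{f}(D),\ H_0 \subseteq H\}$. The only cosmetic difference is that the paper picks finitely many finitely generated $G_i \subseteq L$ directly to cover the generators of $IF$, whereas you route through the auxiliary ideal $J = J_1 + \cdots + J_n$ and then shrink; the mathematical content is identical.
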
 

\begin{proof} Since  it is obvious that an \texttt{ab} semistar operation is always \texttt{eab}, we need only to prove the converse.  Let $I  \in \boldsymbol{f}(D)$ and $J, L \in\boldsymbol{\overline{F}}(D)$.  Assume that $(IJ)^\star \subseteq (IL)^\star$. By the assumption, we have
$(IJ)^\star =   \bigcup\{H^\star \mid H\in  \boldsymbol{f}(D) \,,\; H\subseteq IJ\}= \bigcup\{(IF)^\star \mid F\in  \boldsymbol{f}(D)\,,\; F\subseteq J\}$ 
 and similarly   $(IL)^\star = \bigcup\{(IG)^\star \mid G\in  \boldsymbol{f}(D)\,,\; G\subseteq L\}$. Therefore, for each  $F\in  \boldsymbol{f}(D)\,,\; F\subseteq J$, we have $IF \subseteq \bigcup\{(IG)^\star \mid G\in  \boldsymbol{f}(D)\,,\; G\subseteq L\}$. Thus we can find $G_1, G_2, ..., G_r$ in $\boldsymbol{f}(D)$ with the property that $G_i \subseteq L$,  for $1 \leq i\leq r$, such that:
$$
(IF)^\star \subseteq (IG_1 \cup IG_2 \cup ...\cup IG_r)^\star \subseteq (I(G_1 \cup G_2 \cup ...\cup G_r))^\star\,.$$
Since $\star$ is an \texttt{eab} semistar operation then $F^\star \subseteq (G_1 \cup G_2 \cup ...\cup G_r)^\star \subseteq \bigcup \{ G^\star  \mid G \in  \boldsymbol{f}(D)\,,\; G \subseteq L\} = L^{\star_{_{\!f}}} =L^\star$ and so
$J^\star =J^{\star_{_{\!f}}} = \bigcup \{ F^\star  \mid F\in  \boldsymbol{f}(D)\,,\; F\subseteq J\} \subseteq L^\star$.
\end{proof}

The next result provides a useful generalization of   Lemma \ref{le:2}.

	      \begin{prro} \label{pr:4}  Let $D$ be an integral domain, let 
         $\star$ be a semistar operation on $D$, and let  $F \in \boldsymbol{f}(D)$. Then
$F$ is $\star$--\texttt{eab} if and only if  F is  $\star_{_{\! f}}$--\texttt{(e)ab}.     In particular, the notions of $\star$--\texttt{eab} semistar operation and   $\star_{_{\! f}}$--\texttt{(e)ab}  semistar operation coincide.
\end{prro}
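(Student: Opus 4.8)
The plan is to establish the pointwise chain of equivalences ``$F$ is $\star$--\texttt{eab}'' $\Leftrightarrow$ ``$F$ is $\starf$--\texttt{eab}'' $\Leftrightarrow$ ``$F$ is $\starf$--\texttt{ab}'', and then deduce the global statement by letting $F$ range over $\boldsymbol{f}(D)$. The key preliminary observation is that $\star$ and $\starf$ agree on $\boldsymbol{f}(D)$: for $H \in \boldsymbol{f}(D)$ one has $H^{\starf} = \bigcup\{G^\star \mid G \in \boldsymbol{f}(D),\, G \subseteq H\} = H^\star$, since $H$ itself is the largest member of this family and $\star$ is order preserving. Now every ideal occurring in the definition of ``$F$ is $\star$--\texttt{eab}'' --- namely $FG$, $FH$, $G$, $H$ with $G, H \in \boldsymbol{f}(D)$ --- is finitely generated, because products and finite sums of finitely generated ideals are again finitely generated. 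Hence $\star$ and $\starf$ impose \emph{literally the same} condition on these ideals, and the first equivalence is immediate. The implication ``$F$ is $\starf$--\texttt{ab} $\Rightarrow$ $F$ is $\starf$--\texttt{eab}'' is also trivial, being merely a restriction of the quantifier from $\boldsymbol{\overline{F}}(D)$ to $\boldsymbol{f}(D)$.

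The substantive direction is ``$F$ is $\starf$--\texttt{eab} $\Rightarrow$ $F$ is $\starf$--\texttt{ab}'', which I would prove by localizing the argument of Lemma \ref{le:2} to the single ideal $F$ (note that in that proof the \texttt{eab} hypothesis is used only for the one ideal being cancelled). Let $J, L \in \boldsymbol{\overline{F}}(D)$ satisfy $(FJ)^{\starf} \subseteq (FL)^{\starf}$. Since $\starf$ is of finite type and $F$ is finitely generated, I would first rewrite both closures as directed unions indexed by finitely generated subideals,
$$(FJ)^{\starf} = \bigcup\{(FG)^{\starf} \mid G \in \boldsymbol{f}(D),\, G \subseteq J\}, \qquad (FL)^{\starf} = \bigcup\{(FG')^{\starf} \mid G' \in \boldsymbol{f}(D),\, G' \subseteq L\},$$
the point being that a finitely generated subideal of $FJ$ involves only finitely many elements of $J$ and so lies in some $FG$ with $G \subseteq J$ finitely generated. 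Fix $G \subseteq J$ in $\boldsymbol{f}(D)$. Then $FG$ is finitely generated and sits inside the directed union on the right, so by directedness it is contained in a single member $(FG')^{\starf}$ with $G' \in \boldsymbol{f}(D)$, $G' \subseteq L$ (equivalently, in $(F(G'_1 + \cdots + G'_r))^{\starf}$ for finitely many $G'_i \subseteq L$). Applying $\starf$ gives $(FG)^{\starf} \subseteq (FG')^{\starf}$, whereupon the $\starf$--\texttt{eab} property of $F$ yields $G^{\starf} \subseteq (G')^{\starf} \subseteq L^{\starf}$. Letting $G$ run over all finitely generated subideals of $J$ gives $J^{\starf} = \bigcup\{G^{\starf} \mid G \subseteq J,\, G \in \boldsymbol{f}(D)\} \subseteq L^{\starf}$, as desired.

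I expect the only delicate point to be the compactness step: passing from $FG \subseteq (FL)^{\starf}$ to containment in a \emph{single} finite-type piece $(FG')^{\starf}$. This is exactly where the finite generation of $FG$ (hence of $F$) is indispensable, and it is the reason one cannot simply cancel $F$ directly against the possibly non-finitely-generated $J$ and $L$: the hypothesis that $\starf$ is of finite type is precisely what legitimizes the reduction to finitely generated approximants. With the pointwise chain established, the final ``in particular'' clause follows by quantifying over all $F \in \boldsymbol{f}(D)$, so that $\star$ is an \texttt{eab} operation iff every $F$ is $\star$--\texttt{eab} iff every $F$ is $\starf$--\texttt{eab} iff every $F$ is $\starf$--\texttt{ab}, i.e. iff $\starf$ is \texttt{eab} iff $\starf$ is \texttt{ab} (consistently with Lemma \ref{le:2}).
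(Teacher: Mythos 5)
Your proof is correct and follows essentially the same route as the paper: the definitional identification of $\star$--\texttt{eab} with $\starf$--\texttt{eab} on finitely generated ideals, and then the pointwise localization of the argument of Lemma \ref{le:2} (finite-type decomposition of $(FJ)^{\starf}$ and $(FL)^{\starf}$, extraction of a single finitely generated $G' \subseteq L$, cancellation of $F$, and a union over the finitely generated subideals of $J$), which is exactly what the paper compresses into ``arguing as in Lemma \ref{le:2}.'' Your explicit treatment of the directedness/compactness step is a welcome expansion of a detail the paper leaves implicit, but it is not a different method.
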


\begin{proof} Since from the definition it follows that the notion of $\star$--\texttt{eab} coincides with the notion of $\star_{_{\! f}}$--\texttt{eab}  and, by  Lemma \ref{le:2},   the notion of $\starf$--\texttt{eab} coincides with the notion of $\star_{_{\! f}}$--\texttt{ab},   it remains to show that if $F$ is $\star$--\texttt{eab} then   $F$ is $\star_{_{\! f}}$--\texttt{ab}, when $F$ belongs to  $\boldsymbol{f}(D)$. Let $G, H \in 
\overline{\boldsymbol{F}}(D)$ and assume that 
$(FG)^{\star_{_{\! f}}} \subseteq (FH)^{\star_{_{\! f}}}$, then arguing as in Lemma  \ref{le:2}, 
for each  $G' \in \boldsymbol{f}(D)$, with $G'\subseteq G$, we can find a $H'_{G'} \in \boldsymbol{f}(D)$, with 
$H'_{G'} \subseteq H$, in such a way that $(FG' )^\star \subseteq (FH'_{G'})^\star$. Since $F$ is $\star$--\texttt{eab}, then ${(G' )}^\star \subseteq (H'_{G'})^\star$ and so $G^{\star_{_{\! f}}} = \bigcup \{ {(G' )}^\star \mid 
G' \in \boldsymbol{f}(D)\,,\; G'\subseteq G\} \subseteq
\bigcup \{ (H'_{G'})^\star \mid 
G' \in \boldsymbol{f}(D)\,,\; G'\subseteq G\} \subseteq 
\bigcup \{ {(H' )}^\star \mid 
H' \in \boldsymbol{f}(D)\,,\; H'\subseteq H\}= H^{\star_{_{\! f}}}$. 
\end{proof}

 \bigskip

 If   $\calbW$ is a given family of valuation {\rc overrings} of $D$, 
     then  the mapping $\wedge_{\calbW}$   defined as follows: for each $E\in \boldsymbol{\overline{F}}(D)$,
     $$
E^{\wedge_{\calbW}} := \bigcap\{EW \mid W \in \calbW \}
$$
          defines  an \texttt{ab} semistar operation of $D$, since $FW$ is principal in $W$, for each $F\in \boldsymbol{f}(D)$ and for each $W \in \calbW$.  We call a semistar operation of the previous type a \emph{${\calbW}$\!--operation of $D$}.  If ${\calbW}$ coincides with the set ${\calbV}$ of all valuation overrings of $D$, then  we call ${\wedge_{\calbV}}$  \emph{ the $b$--operation of $D$}.

  If we assume that, given  a family of valuation overrings overrings $\calbW$ of $D$,  the overring $T:=\bigcap\{W \mid W \in \calbW\}$ of $D$  coincides with $D$, then  the map   $\wedge_{\calbW}$  defines a (semi)star operation of $D$.  In particular, if (and only if) $D$ is integrally closed, the $b$--operation is a  (semi)star operation of $D$.    \ec

\begin{reem} \rm             
Given an integrally closed domain $D$, note that Gilmer discusses star operations defined as  above (on the fractional ideals of $D$) using  collections of valuation overrings $\calbW$ of  $D$ with the property that  $D =\bigcap\{W \mid W \in \calbW\}$  and refers to them as 
\emph{$w$--operations}.    Since the terminology of $w$--operation was re-introduced recently by Wang and McCasland (see \cite{WMc1} and  \cite{WMc2}) for denoting a very different kind of star operation,    in order to avoid a possible confusion, we have slightly modified Gilmer's original terminology (i.e., star ``$\calbW$--operation'' instead of ``$w$--operation''), by emphasizing the set of valuation overrings   occurring   in the definition.
\end{reem}    

Gilmer proves   that, given   any \texttt{eab} star operation $\ast$ of a domain $D$, there exists a (star) ${\calbW}$\!--operation  of $D$ which agrees with $\ast$ on all finitely generated ideals \cite[Theorem 32.12]{G}.  
It would seem then that  ${\calbW}$\!--operations may be the most refined class of \texttt{eab}  operations.   We have one more class to define, however.

For a domain
$\,D\,$ and a semistar operation $\,\star\,$ of $\,D\,$, \ we say 
that 
a
valuation overring $\,V\,$ of $\,D\,$ is \it a
$\star$--valuation overring of $\,D\,$ \rm provided $\,F^\star
\subseteq FV\,,$ \ for each $\,F \in \boldsymbol{f}(D)\,$.  
Set $\calbV(\star) := \{ V \mid V  \mbox{ is a $\star$--valuation  overring of $D$}\}$ and let ${b(\star)}:= \wedge_{\calbV(\star)}$  the \texttt{ab} semistar operation on $D$ defined as follows: for each $E\in \overline{\boldsymbol{F}}(D)$,
$$
E^{{b(\star)}}:= \bigcap\{EV \mid V \in \calbV(\star)\}\,.
$$
 Clearly, when $\star$ coincides with $d$, the  identity  (semi)star operation, then $b(d) =b$.    Note that, {\rc this} example shows that even if $\star$ is a (semi)star operation, $b(\star)$ may be a {\sl proper} semistar operation {\rc (e.g.,  $b(d) =b$ is a (semi)star operation of $D$ if and only if $D$ is integrally closed)}.

We call the semistar operation $b(\star)$ defined as above, using the $\star$--valuation overrings of a domain $D$ associated with a given semistar operation $\star$ on $D$, \emph{the complete $\calbW$--operation associated with $\star$}.    From the definition, it follows that $\starf \leq b(\star)$.  \emph{A complete \texttt{ab} operation}  is a semistar operation $\star$ such that $\star = b(\star)$.    Clearly, a complete \texttt{ab} operation is a $\calbW$--operation and so, without loss of generality, we may consider just the complete $\calbW$--operations.   Since $F^{b(\star)}V = F^{\star}V$, for all $F\in \f(D)$ and for all $V \in \calbV(\star)$, then clearly, $b(b(\star)) = b(\star)$ and so $b(\star)$ is a complete \texttt{ab} operation.

Let $D$ be a domain and $\star$ a semistar operation.  Note that,
by definition, the $\,\star$--valuation overrings coincide with the
$\,\star_{_{\! f}}$--valuation overrings.   Hence, the above construction could be done using $ {\star_{_{\! f}} }$ in place of $\star$, i.e., $b(\star) = b(\star_{_{\! f}})$.

\begin{reem}  \rm Note that not all $\calbW$--operations are complete (see  Example \ref{notcompleteW}).  In a work in progress on the ultrafilter topology of abstract Riemann surfaces (in the sense of Zariski  {\cite{ZS}}), we will describe the complete \texttt{ab} semistar operation $b({\wedge_{\calbW}})$ for any family $\calbW$ of valuation domains sharing the same field of quotients.

\end{reem}

\medskip 

 The four distinguished classes of  semistar operations introduced above are related as follows.

 \begin{prro}  \label{eab} Let $D$ be a domain and let $\star$ be a semistar operation on $D$.  Consider the following four propositions.
 
 \begin{enumerate}
 
 \item $\star$ is an \texttt{eab} operation.
 
 \item $\star$ is an \texttt{ab} operation.
 
 \item $\star$ is a $\calbW$--operation.
 
 \item $\star$ is a complete $\calbW$--operation.
 
 \end{enumerate}
 
 Then $(4) \Rightarrow (3) \Rightarrow (2) \Rightarrow (1)$.
 \end{prro}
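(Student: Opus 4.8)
The plan is to treat the three implications separately, since they are of quite different character: $(2)\Rightarrow(1)$ and $(4)\Rightarrow(3)$ are essentially a matter of unwinding definitions, while $(3)\Rightarrow(2)$ carries the real content. For $(2)\Rightarrow(1)$ I would simply note that the \texttt{ab} condition requires $(FG)^\star\subseteq(FH)^\star$ to force $G^\star\subseteq H^\star$ for all $G,H\in\boldsymbol{\overline{F}}(D)$, whereas \texttt{eab} demands the same only for $G,H\in\boldsymbol{f}(D)$; since $\boldsymbol{f}(D)\subseteq\boldsymbol{\overline{F}}(D)$, restricting the \texttt{ab} hypothesis to finitely generated $G,H$ gives \texttt{eab} verbatim (this was already recorded as ``an \texttt{ab} operation is obviously an \texttt{eab} operation''). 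For $(4)\Rightarrow(3)$ I would unfold the definition of a complete \texttt{ab} operation: if $\star=b(\star)$, then $\star=\wedge_{\calbV(\star)}$, and $\calbV(\star)$ is by construction a family of valuation overrings of $D$; hence $\star$ is the $\calbW$--operation associated with the family $\calbW:=\calbV(\star)$.

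The substance lies in $(3)\Rightarrow(2)$. Assume $\star=\wedge_{\calbW}$ for a family $\calbW$ of valuation overrings, so that $E^\star=\bigcap\{EW\mid W\in\calbW\}$ for every $E\in\boldsymbol{\overline{F}}(D)$. The first key step is the identity $E^\star W=EW$ for each $W\in\calbW$: from $E\subseteq E^\star\subseteq EW$ one obtains, after multiplying through by $W$, the chain $EW\subseteq E^\star W\subseteq EW\cdot W=EW$. Now fix $F\in\boldsymbol{f}(D)$ and $G,H\in\boldsymbol{\overline{F}}(D)$ with $(FG)^\star\subseteq(FH)^\star$. Multiplying this containment by $W$ and applying the identity to $E=FG$ and to $E=FH$ yields $(FG)W\subseteq(FH)W$ for every $W\in\calbW$. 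Since $F$ is finitely generated and $W$ is a valuation domain, $FW$ is a nonzero principal, hence invertible, fractional ideal of $W$; rewriting $(FG)W=(FW)(GW)$ and $(FH)W=(FW)(HW)$ and cancelling the invertible factor $FW$ gives $GW\subseteq HW$ for every $W$. Intersecting over $\calbW$ then produces $G^\star=\bigcap_{W}GW\subseteq\bigcap_{W}HW=H^\star$, which is precisely the $\star$--\texttt{ab} condition for $F$; as $F\in\boldsymbol{f}(D)$ was arbitrary, $\star$ is \texttt{ab}.

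The main obstacle to anticipate is that $\star$ is defined as an intersection, so the hypothesis $(FG)^\star\subseteq(FH)^\star$ is a priori only a statement about intersections and does not obviously ``localize'' to the individual overrings $W$. The identity $E^\star W=EW$ is exactly what repairs this defect: it converts the single global containment into the pointwise containments $(FG)W\subseteq(FH)W$, one for each $W$, after which cancellation is legitimate because each $FW$ is invertible in the valuation domain $W$. Everything else reduces to the routine module identity $(FG)W=(FW)(GW)$ (and likewise for $H$), which holds since $W\cdot W=W$.
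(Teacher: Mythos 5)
Your proposal is correct and follows essentially the same route as the paper: the paper likewise dismisses $(4)\Rightarrow(3)$ and $(2)\Rightarrow(1)$ as immediate from the definitions and reduces $(3)\Rightarrow(2)$ to the observation that a finitely generated ideal extends to a principal ideal in any valuation overring. Your write-up merely makes explicit the details the paper leaves tacit, namely the identity $E^{\star}W = EW$ for each $W \in \calbW$ and the cancellation of the principal ideal $FW$ before intersecting over $\calbW$.
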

 
 \begin{proof}  The only implication which is not trivial is $(3) \Rightarrow (2)$.  This follows immediately though from the observation that any finitely generated ideal of a domain $D$ extends to a principal ideal in any valuation overring $V$ of $D$.
 \end{proof}

  The next goal is to give examples to show that each of the implications is not reversible.    In fact, this paper began with the desire to demonstrate that the \texttt{ab} property was properly stronger than the \texttt{eab} property (we were unable to find an example in the literature of an \texttt{eab} operation which was not \texttt{ab}) and expanded to a broader   study and finer classification of \texttt{eab} operations.  
In particular,   we pay special   attention to the class of complete $\calbW$--operations and give several characte\-ri\-zations of them.

 It is not so simple to demonstrate that the implications in Proposition \ref{eab}  are not reversible.  We will give three examples covering the three pairs of classes, including the desired example of an \texttt{eab} operation which is not an \texttt{ab} operation.  First, however, we will give the promised additional characterizations of the class of complete $\calbW$--operations.

 \medskip

  We start by extending   Gilmer's    notion of equivalent  star operation to the semistar setting: let $\star_1$ and $\star_2$ be two semistar operations defined on an integral domain $D$,  we say  that \emph{$\star_1$ is equivalent to $\star_2$} if they agree on $\f(D)$, i.e., $F^{\star_1} = F^{\star_2}$ for each $F \in \f(D)$.   It is very plausible that there can be numerous  \texttt{eab}  semistar operations that are all equivalent to the same \texttt{(e)ab}  semistar operation of finite type.   Note that, if $\star_1$ and $\star_2$ are equivalent and $\star_1$ is \texttt{eab},  then $\star_2$ is also \texttt{eab}.      Hence, we can partition the set of all \texttt{eab}  semistar operations on a domain $D$ into classes of   equivalent operations.   Each  equivalence class  has a single distinguished member, the one of finite type.

A result  proven in \cite[Proposition 3.4 and Theorem 3.5]{FL1} ensures that each \texttt{eab} semistar  operation $\star$ is equivalent to $b(\star)$. 
The preceding fact seems to give us two distinguished members (i.e., $\starf$ and $b(\star)$) in each equivalence class of \texttt{eab} semistar  operations.  We resolve this apparent conflict by introducing yet another semistar construction.

 Suppose that $D$ is a domain with quotient field $K$, $\star$ is a semistar operation on $D$, $X$  is an indeterminate over $D$ and $\boldsymbol{c}(h)$ is the content of a polynomial $h \in D[X]$.  Then,  we define  
 
 $$
\begin {array} {rl}
\mbox{Kr}(D,\star) := \{ f/g  \, \ |& \, f,g \in D[X], \ g 
\neq 0, 
\;
\mbox{ and there exists }\\ & \hskip -4pt h \in D[X] \setminus \{0\} 
\; \mbox{ 
with } (\boldsymbol{c}(f)\boldsymbol{c}(h))^\star
\subseteq (\boldsymbol{c}(g)\boldsymbol{c}(h))^\star \,\}.
\end{array}
$$
 
This is a B\'ezout domain with quotient field $K(X)$, called \it  the semistar Kronecker function ring  associated to  semistar operation $\star$ \cite[Theorem 5.1 and Theorem 3.11 (3)]{FL1}.  \rm  We can then define
 an \texttt{eab} semistar operation on $D$, denoted by ${\circlearrowleft_{\mbox{\tiny Kr}}}$,   as follows: 
 $$
 E^{\circlearrowleft_{\mbox{\tiny Kr}}}:= E\mbox{Kr}(D, \star) \cap K\,, \quad \ \mbox{for each $E \in \overline{\boldsymbol{F}}(D)$\,, \cite[Corollary 5.2]{FL1}}\,.$$

 From \cite[Proposition 3.4 and its proof]{FL:2001b}   (or, in a more general context, from \cite[Proposition 6.3]{FL:2007})   it follows that, given an \texttt{eab} semistar operation $\star$, ${\circlearrowleft_{\mbox{\tiny\rm Kr}}} =b(\star)$.

On the other hand, it is proven in   \cite[Corollary 5.2]{FL1}   that ${\circlearrowleft_{\mbox{\tiny Kr}}}$ is a semistar operation of finite type.     Hence, the preceding    results, all collected, yield the following: for any  \texttt{eab} semistar operation $\star$, ${\circlearrowleft_{\mbox{\tiny\rm Kr}}} = b(\star)
= \star_{_{\! f}}$.

\smallskip

There is still another  construction, with a more classical origin, for associating to a semistar operation an \texttt{(e)ab} semistar operation of finite type. In order to introduce this construction we need first to generalize, in the semistar operation setting, one of the useful characterizations,  given in \cite[Theorem 6.5]{G} and \cite[Lemma 1]{AA} for cancellation and  quasi--cancellation ideals.  

\begin{leem} \label {le:3} \sl Let $D$ be a domain, let $F \in \boldsymbol{f}(D) 
$ and let $\star$ be a semistar operation on $D$. Then, 
   \sl $F$ is $\star$--\texttt{eab}  [respectively, $\star$--\texttt{ab}]  if and only if  $\left(
               (FH)^\star : F^\star \right) = H^\star$, for each $H \in
               \boldsymbol{f}(D)$ [respectively, for each $H \in
\overline{ \boldsymbol{F}}(D)$]. \rm \end{leem} 

 \noindent \ec~(Note that  $\left((FH)^\star : F^\star \right) = \left(
               (FH)^\star : F\right)$, so the previous equivalences can be stated in a formally slightly different way.)
               
                \begin{proof}  We consider only  the \texttt{ab} case, since $\star$--\texttt{eab} coincides with $\star_{_{\! f}}$--\texttt{ab}  (Proposition \ref{pr:4}). As a matter of fact, 
               $\left(
               (FH)^\star : F^\star \right) = H^\star$,  coincides with $\left(
               (FH)^{\star_{_{\! f}}}: F^{\star_{_{\! f}}}\right) = H^{\star_{_{\! f}}}$, when  $F, H \in
               \boldsymbol{f}(D)$; \ if  $H \in
\overline{ \boldsymbol{F}}(D)$ and    $F\in
               \boldsymbol{f}(D)$, then $\left(
               (FH)^{\star_{_{\! f}}}  : F^{\star_{_{\! f}}}  \right)  = \left(
               (FH)^{\star_{_{\! f}}}  : F \right) = \left((\bigcup \{
               (FL)^{\star} \mid L\subseteq H\,,\; L \in \boldsymbol{f}(D)\}) : F \right) = 
              \bigcup \{  \left(
               (FL)^{\star}: F \right) \mid L\subseteq H\,,\; L \in \boldsymbol{f}(D)\}$.  
               
               The ``if'' part:  it is easy to see that, $F$ is 
$\star$--\texttt{ab} if and
             only if $(FG)^{\star} =(FH)^{\star}$, with $G,\ H \in
                         \overline{ \boldsymbol{F}}(D)$, implies that $G^{\star}= H^{\star}$.  
Then,
             $$
             (FG)^{\star}
             =(FH)^{\star} \; \Rightarrow \; \left((FG)^{\star}: 
F^\star\right)=
             \left((FH)^{\star}: F^\star\right)\,.$$
             The conclusion now is a straightforward consequence of 
             the assumption.
             
    The ``only if'' part:  given   
             $H  \in\overline{ \boldsymbol{F}}(D)$, clearly  
	     $H^\star \subseteq \left( (FH)^\star : F^\star \right)$. 
	     Conversely, note that $F \left( (FH)^\star : F^\star \right) \subseteq  (FH)^\star$, and so
 we have \ $\left(F \left( (FH)^\star : F^\star \right)\right)^\star \subseteq  (FH)^\star$. Therefore, by the assumption, $\left( (FH)^\star : F^\star \right)^\star \subseteq H^\star$. \end{proof}
 \ec

Using the characterization in Lemma \ref{le:3}, we can associate to any semistar ope\-ration $\,\star\,$ of
$\,D\,$  an \texttt{(e)ab} semistar operation of finite type 
$\, \star_a\, $  of $\,D\,$, called {\it the \texttt{(e)ab}
semistar
operation associated to $\,\star\,$}, \ defined as follows 
for each $ F \in \boldsymbol{f}(D)$ and
for each  $E \in {\overline{\boldsymbol{F}}}(D)$:
$$
\begin {array} {rl}
F^{\star_a} :=& \hskip -5pt  \bigcup\{((FH)^\star:H^\star) \; \ | \; \, \; H \in
\boldsymbol{f}(D)\}\,, \\
E^{\star_a} :=& \hskip -5pt  \bigcup\{F^{\star_a} \; | \; \, F \subseteq E\,,\; F \in
\boldsymbol{f}(D)\}\,, 
\end{array}
$$
\cite[Definition 4.4 and Proposition 4.5]{FL1}. \rm  The previous
construction, in the ideal systems setting, is essentially due to {P. Jaffard}  \cite{J:1960}
and {F. Halter-Koch} \cite{HK:1997}, \cite{HK:1998}.

 Obviously 
 $(\star_{_{\!f}})_{a}= \star_{a}$. Note also that, when $\star = 
\star_{_{\!f}}$, then $\star$ is \texttt{(e)ab} if and only if $\star = 
\star_{a}\,$ \cite[Proposition 4.5(5)]{FL1}. 

It follows that if $\star$ is any \texttt{eab} semistar operation then $\star_{a}$ is the unique  \texttt{(e)ab}  semistar operation which is of finite type and is equivalent to $\star$.  Hence, we can   extend   our previous characterization.

\begin{prro}
Let $D$ be a domain and let $\star$ be an \texttt{eab} semistar operation.  Then $\,\star_{_{\! f}} = {\circlearrowleft_{\mbox{\tiny\rm  Kr}}} = b(\star)  = \star_{a}$.
\end{prro}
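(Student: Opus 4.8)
The plan is to reduce the whole statement to a single structural observation: a semistar operation of finite type is completely determined by its restriction to $\boldsymbol{f}(D)$. Indeed, if $\star_1$ and $\star_2$ are both of finite type and agree on $\boldsymbol{f}(D)$, then for every $E \in \overline{\boldsymbol{F}}(D)$ we have $E^{\star_1} = \bigcup\{F^{\star_1} \mid F \subseteq E,\, F \in \boldsymbol{f}(D)\} = \bigcup\{F^{\star_2} \mid F \subseteq E,\, F \in \boldsymbol{f}(D)\} = E^{\star_2}$, so $\star_1 = \star_2$. In other words, within any equivalence class there is at most one operation of finite type. Hence it suffices to check, for each of the four operations $\starf$, ${\circlearrowleft_{\mbox{\tiny\rm Kr}}}$, $b(\star)$, and $\stara$, just two things: that it is of finite type, and that it agrees with $\star$ on $\boldsymbol{f}(D)$.

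First I would dispatch $\starf$: it is of finite type since $(\starf)_{_{\! f}} = \starf$, and it is equivalent to $\star$ because for $F \in \boldsymbol{f}(D)$ the relation $\starf \leq \star$ together with $F^\star \subseteq F^{\starf}$ (take $F$ itself in the defining union) gives $F^{\starf} = F^\star$. For $b(\star)$ I would simply assemble the facts already recorded above: $b(\star)$ is equivalent to $\star$ by \cite[Proposition 3.4 and Theorem 3.5]{FL1}, while ${\circlearrowleft_{\mbox{\tiny\rm Kr}}} = b(\star)$ by \cite[Proposition 3.4]{FL:2001b} (or \cite[Proposition 6.3]{FL:2007}) and ${\circlearrowleft_{\mbox{\tiny\rm Kr}}}$ is of finite type by \cite[Corollary 5.2]{FL1}; therefore $b(\star) = {\circlearrowleft_{\mbox{\tiny\rm Kr}}}$ is a finite-type operation equivalent to $\star$.

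The one genuinely new link in the chain is $\stara$, and here the \texttt{eab} hypothesis enters decisively through Lemma \ref{le:3}. The operation $\stara$ is of finite type by its very definition on $\overline{\boldsymbol{F}}(D)$. To see it agrees with $\star$ on $\boldsymbol{f}(D)$, fix $F \in \boldsymbol{f}(D)$; for each $H \in \boldsymbol{f}(D)$ the ideal $H$ is $\star$--\texttt{eab}, so Lemma \ref{le:3} applied to $H$ gives $\left((HF)^\star : H^\star\right) = F^\star$, that is $\left((FH)^\star : H^\star\right) = F^\star$. Taking the union over all $H \in \boldsymbol{f}(D)$ then yields $F^{\stara} = F^\star$, so $\stara$ is equivalent to $\star$.

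With $\starf$, $b(\star) = {\circlearrowleft_{\mbox{\tiny\rm Kr}}}$, and $\stara$ all shown to be finite-type operations agreeing with $\star$ on $\boldsymbol{f}(D)$, the uniqueness observation of the first paragraph forces them to coincide, which is exactly the asserted chain of equalities. I do not expect a real obstacle here, as the result is essentially a synthesis of the preceding constructions; the only point demanding care is that the equivalence of $\stara$ with $\star$ truly requires $\star$ to be \texttt{eab}, since without that hypothesis Lemma \ref{le:3} fails to deliver $\left((FH)^\star : H^\star\right) = F^\star$ and $\stara$ may then differ from $\star$ on $\boldsymbol{f}(D)$.
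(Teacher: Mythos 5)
Your proposal is correct and takes essentially the same approach as the paper: both arguments reduce to the observation that a finite-type semistar operation is determined by its values on $\boldsymbol{f}(D)$ (so each equivalence class has at most one finite-type member), and then verify that each of $\starf$, ${\circlearrowleft_{\mbox{\tiny\rm Kr}}} = b(\star)$ and $\stara$ is of finite type and agrees with $\star$ on $\boldsymbol{f}(D)$, using the same citations to \cite{FL1}, \cite{FL:2001b} and \cite{FL:2007}. The only variation is the $\stara$ link, where the paper invokes \cite[Proposition 4.5(5)]{FL1} (via $\stara = (\starf)_a$ and $\starf$ being \texttt{eab} of finite type), while you verify $F^{\stara} = F^{\star}$ directly from Lemma \ref{le:3} --- since every $H \in \boldsymbol{f}(D)$ is $\star$--\texttt{eab}, each term $\left((FH)^{\star} : H^{\star}\right)$ in the defining union equals $F^{\star}$ --- a correct and self-contained substitution of equivalent content.
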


\begin{reem}  \rm Note that, with the notation introduced above,  $\,D^{\star_a}\,$ is integrally
closed and contains the integral closure of $\,D\,$ in its field of quotients  \cite[Proposition 4.3 and Proposition 4.5 (10)]{FL1}. In particular,
 when $\,\star = v\,$, then {$\,D^{v_{a}}\,$} 
coincides
 with the pseudo-integral closure of 
$\,D\,$ 
 introduced by {D.F. Anderson}, 
{Houston}
 and {Zafrullah} \cite{AHZ:1991}. Therefore, $\star_a$ is a semistar operation which might be a proper semistar operation, even if $\star$ is a (semi)star operation.
 \end{reem}

The next goal is to show that in many cases the properties \texttt{eab} and \texttt{ab}  coincide. Probably, the most important (semi)star operation which is not   generally of finite type is the $v$--operation.
In this case, from \cite[Theorem 34.6]{G} it follows that  \it the following properties are equivalent:
\begin{enumerate}
\item[(i)] For each $F\in \f(D)$, $(FF^{-1})^v =D$.
\item[(ii)] For each $F\in \f(D)$,  $F$ is $v$--\texttt{ab} (i.e., $v$ is  \texttt{ab}).
\item[(iii)] For each $F\in \f(D)$,  $F$ is $v$--\texttt{eab} (i.e., $v$ is  \texttt{eab} or, equivalently, $D$ is a $v$--domain \cite[page 418]{G}). 

\end{enumerate}
\rm

\medskip

 We have already observed that, for a semistar operation $\star$, if $\star =\starf$, then the notions of $\star$--\texttt{ab} and  $\star$--\texttt{eab} coincide (Proposition \ref{pr:4}). The following result provides further information, but to state it we need to recall some standard facts on semistar operations and related ideals.

We say that a nonzero ideal $I$ of $D$ is a
\emph{quasi-$\star$-ideal} if $I^\star \cap D = I$, a
\emph{quasi-$\star$-prime} if it is a prime quasi-$\star$-ideal,
and a \emph{quasi-$\star$-maximal} if it is maximal in the set of
all  proper  quasi-$\star$-ideals. A quasi-$\star$-maximal ideal is  a
prime ideal. It is possible  to prove that each  proper  quasi-$\star_{_{\!
f}}$-ideal is contained in a quasi-$\star_{_{\! f}}$-maximal
ideal.  More details can be found in \cite[page 4781]{FL:2003}. We
will denote by $\QMax^{\star}(D)$ the set of the 
quasi-$\star$-maximal ideals  of $D$. By the previous considerations we have that 
$\QMax^{\star}(D)$ is not empty, for all semistar operations $\star$ {\sl of finite type}.
Then, for each $E \in \boldsymbol{\overline{F}}(D)$,  we can consider
  $$
  E^{\start} := \bigcap \left \{ED_P \mid P \in  \QMax^{\star_{_{\! f}}}(D) \right\}\,.$$ 
It is well known that the previous definition gives rise to  a semistar operation $\start$ of $D$ which is stable (i.e.,  $(E \cap F)^{\start}
= E^{\start} \cap F^{\start}$, for each $E,F \in \boldsymbol{\overline{F}}(D)$) and of finite type \cite[Corollary 3.9]{FH}. {\rc Recall that, if $K$ is the quotient field of $D$ and $X$ is an indeterminate over $K$,  we set $\Na(D, \star):= \{f/g \in K(X) \mid  f, g \in D[X], 0 \neq g \mbox{ and either } \co(f)^\star \subseteq \co(g)^\star \mbox{ or } f=0 \}$. It is known that $E^{\start} = E \Na(D, \star) \cap K$ for all $E \in \boldsymbol{\overline{F}}(D)$ \cite[Proposition 3.4(3)]{FL:2003}}.

\begin{prro} \label{star-canc} Let $\,\star\,$ be a semistar 
operation of
an integral domain $\,D\,$ and let $F\in \f(D)$.   The following properties are equivalent:
\begin{enumerate}
\item[(i)] $(FF^{-1})^{\starf} =D^\star $.
\item[(ii)]  $F$ is $\start$--\texttt{ab}.
\item[(iii)]  $F$ is $\start$--\texttt{eab}. 

\end{enumerate}

\end{prro}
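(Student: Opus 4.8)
The plan is to route all three conditions through a single local criterion, namely that $FD_P$ is a principal (equivalently invertible) ideal of $D_P$ for every $P\in\QMax^{\starf}(D)$, exploiting that $\start$ is stable and of finite type. First I would dispose of $(ii)\Leftrightarrow(iii)$ for free: since $\start$ is of finite type, $(\start)_{_{\!f}}=\start$, so Proposition \ref{pr:4} (equivalently Lemma \ref{le:2}) applied to $\start$ shows that $F$ is $\start$--\texttt{ab} if and only if $F$ is $\start$--\texttt{eab}. The two facts driving the rest are: the stability identity $(E^{\start})D_P=ED_P$ for every $E\in\overline{\boldsymbol{F}}(D)$ and every $P\in\QMax^{\starf}(D)$ (immediate from $E^{\start}=\bigcap\{ED_Q\}\subseteq ED_P\subseteq(E^{\start})D_P$); and the fact that, as $F\in\f(D)$ is finitely generated, colons commute with localization, $(A:F)D_P=(AD_P:FD_P)$. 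I would also use the remark after Lemma \ref{le:3} that $((FH)^{\start}:F^{\start})=((FH)^{\start}:F)$.

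Next I would establish that $(ii)/(iii)$ is equivalent to the local criterion. Assuming $F$ is $\start$--\texttt{ab}, Lemma \ref{le:3} gives $((FH)^{\start}:F)=H^{\start}$ for every $H\in\overline{\boldsymbol{F}}(D)$. Localizing this identity at a fixed $P$ and applying the two facts above yields $(FHD_P:FD_P)=HD_P$ for all such $H$; letting $H$ range over $\f(D)$, so that $HD_P$ ranges over all finitely generated ideals of $D_P$, shows that $FD_P$ is a quasi--cancellation ideal of the local domain $D_P$, hence principal by the equivalences in Remark \ref{rk:2}(1) together with Kaplansky's local result in Remark \ref{rk:2}(3). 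Conversely, if $FD_P=a_PD_P$ is principal for every $P$, then for $x\in((FH)^{\start}:F)$ one has $xFD_P\subseteq(FH)^{\start}D_P=FHD_P=a_PHD_P$, and cancelling $a_P$ gives $x\in HD_P$ for all $P$, whence $x\in\bigcap_P HD_P=H^{\start}$; as the reverse inclusion $H^{\start}\subseteq((FH)^{\start}:F)$ is automatic, Lemma \ref{le:3} returns that $F$ is $\start$--\texttt{ab}.

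It then remains to match $(i)$ with the same local criterion. Here I would first observe that $D^{\starf}=D^\star$ (take $F=D$ in the definition of $\starf$), so $(FF^{-1})^{\starf}\subseteq D^\star$ always and $(i)$ is exactly $(FF^{-1})^{\starf}\cap D=D$. Since $(FF^{-1})^{\starf}\cap D$ is a quasi--$\starf$--ideal, it equals $D$ precisely when it lies in no $P\in\QMax^{\starf}(D)$ (using that every proper quasi--$\starf$--ideal is contained in a quasi--$\starf$--maximal ideal), i.e. precisely when $FF^{-1}\not\subseteq P$ for every $P$. Finally, using $(D:F)D_P=(D_P:FD_P)$, the condition $FF^{-1}\not\subseteq P$ is equivalent to $FF^{-1}D_P=D_P$, i.e. to $FD_P$ being invertible, hence principal, in the local ring $D_P$. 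Thus $(i)$ holds if and only if $FD_P$ is principal for every $P\in\QMax^{\starf}(D)$, and combining the three reductions gives $(i)\Leftrightarrow(ii)\Leftrightarrow(iii)$.

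The step I expect to be the main obstacle is the forward implication in the local reduction, i.e. descending the global cancellation property $\start$--\texttt{ab} to ordinary cancellation in each $D_P$; this is precisely the $\star$--analogue of the classical implication $(iv)\Rightarrow(v)$ of Remark \ref{rk:2}(1), and a priori it looks delicate because \texttt{ab} is a condition tested on all of $\overline{\boldsymbol{F}}(D)$ simultaneously. The key that unlocks it is to localize the colon identity of Lemma \ref{le:3} rather than trying to localize the \texttt{ab} condition directly, which works only because $\start$ is stable and yields $(E^{\start})D_P=ED_P$ at every $P\in\QMax^{\starf}(D)$. As a cross--check, and as an alternative route for this direction, one could instead invoke the Nagata--ring description $E^{\start}=E\Na(D,\star)\cap K$ recalled before the statement, reducing $\start$--\texttt{ab} to invertibility of $F\Na(D,\star)$ and thence to condition $(i)$; I would keep the localization argument as the primary proof since it needs no extra machinery beyond what is already assembled.
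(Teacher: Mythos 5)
Your proposal is correct, and it hinges on exactly the mechanism the paper uses for its hard step --- reduction to principality of $FD_P$ at every $P \in \QMax^{\starf}(D)$ via the stability identity $E^{\start}D_P = ED_P$ and the cancellation-ideal equivalences of Remark \ref{rk:2}(1) --- but your decomposition is genuinely different and more self-contained. The paper proves the directed cycle (i)$\Rightarrow$(ii)$\Rightarrow$(iii)$\Rightarrow$(i): for (i)$\Rightarrow$(ii) it passes from $(FF^{-1})^{\starf}=D^\star$ to $(FF^{-1})^{\start}=D^{\start}$ using $\QMax^{\starf}(D)=\QMax^{\start}(D)$ from \cite[Corollary 3.5(2)]{FL:2003} and then appeals to $\start$--invertibility; (ii)$\Rightarrow$(iii) is trivial; and only (iii)$\Rightarrow$(i) is argued locally, with the equivalence of (i) with local principality at each $Q\in\QMax^{\starf}(D)$ (and with invertibility of $F\Na(D,\star)$) imported wholesale from \cite[Theorem 2.23]{FP}. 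Moreover, in that step the paper localizes the \texttt{eab} condition itself, using that finitely generated ideals of $D_Q$ are extended from finitely generated ideals of $D$, whereas you localize the colon identity of Lemma \ref{le:3} --- an equivalent maneuver, arguably cleaner, since $(A:F)D_P=(AD_P:FD_P)$ for $F\in\f(D)$ does the bookkeeping in one line. You also make the local criterion a hub with all three conditions proved equivalent to it: this forces you to supply two items the paper's cycle lets it skip, namely a direct proof of (i)$\Leftrightarrow$(i$'$) (which you correctly rebuild from $D^{\starf}=D^\star$, the fact that proper quasi-$\starf$-ideals lie under quasi-$\starf$-maximal ideals, and localization of the inverse) and the converse implication from local principality back to $\start$--\texttt{ab} via $H^{\start}=\bigcap_P HD_P$; in exchange you obtain (ii)$\Leftrightarrow$(iii) for free from Proposition \ref{pr:4} applied to the finite-type operation $\start$, a step the paper never needs. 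What your route buys is independence from the external citations \cite[Theorem 2.23]{FP} and \cite[Corollary 3.5(2)]{FL:2003}; what the paper's route buys is brevity and the extra equivalent formulation (i$''$) on the Nagata ring, which feeds directly into Corollary \ref{P*MD}. All the fine points you flag check out, including $(E^{\start})D_P=ED_P$ for arbitrary $E\in\overline{\boldsymbol{F}}(D)$ and the ``automatic'' inclusion $H^{\start}\subseteq \left((FH)^{\start}:F\right)$, the latter because $\left((FH)^{\start}:F\right)$ is $\start$--closed.
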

\begin{proof}  Since $\QMax^{\starf}(D)= \QMax^{\start}(D) $  \cite[Corollary 3.5(2)]{FL:2003},  it is easy to see that $(FF^{-1})^{\starf} =D^\star $ if and only if $(FF^{-1})^{\start} = D^{\start} $. From this observation, it follows immediately that (i)$\Rightarrow$(ii).  By the definitions, it is clear that (ii)$\Rightarrow$(iii).

(iii)$\Rightarrow$(i)  By \cite[Theorem 2.23]{FP}, recall that (i) is also equivalent to each of the following statements:

\begin{enumerate}
\item[(i$^{\prime}$)] $F_Q$ is a nonzero principal fractional ideal in $D_Q$, for all $Q \in \QMax^{\starf}(D)$.
\item[(i$^{{\prime}{\prime}}$)]  $F \Na(D, \star)$ is an invertible fractional ideal of $\Na(D, \star)$ (i.e., $F \Na(D, \star)_M$ is nonzero principal, for each $M \in \Max(\Na(D, \star))$).
\end{enumerate}

 Let $F \in \f(D)$ be a $\widetilde{\star}$--{\texttt{eab}} ideal.
We want to show that $F_Q$ is a nonzero fractional principal ideal of $D_Q$
for all quasi-$\starf$-maximal (= quasi-$\widetilde{\star}$-maximal) ideal
$Q$ of $D$.
Note that, by definition of $\widetilde{\star}$, it is easy to see that
$H^{\widetilde{\star}}D_Q = HD_Q$, for all $H \in \f(D)$ and for all
quasi-$\widetilde{\star}$-maximal ideal $Q$ of $D$.
From this observation and from the fact that each finitely generated ideal
of $D_Q$ is extended from a finitely generated ideal of $D$, it follows that
$F$ $\widetilde{\star}$--{\texttt{eab}}  implies that $FD_Q$ is nonzero
(quasi-)cancellative in $D_Q$, for all quasi-$\widetilde{\star}$-maximal
ideals $Q$ of $D$. This is equivalent to saying that $FD_Q$ is nonzero principal
in $D_Q$, for all quasi-$\widetilde{\star}$-maximal ideal $Q$ of $D$ by
Remark 2(1).   \end{proof}

From the previous proposition, we reobtain some of the characterizations given in \cite[Theorem 3.1]{FJS} of a \emph{Pr\"ufer $\star$--multiplication domain} (i.e., an integral domain in which every nonzero finitely generated ideal is $\starf$--invertible).

\begin{coor} \label{P*MD}  Let $\star$ be a semistar 
operation of
an integral domain $\,D\,$.   The following properties are equivalent:
\begin{enumerate}
\item[(i)] $D$ is a Pr\"ufer $\star$--multiplication domain.
\item[(ii)]   $\start$ is  \texttt{ab}.
\item[(iii)] $\start$ is  \texttt{eab}. 
\item[(iv)] $\Na(D, \star)$ is a Pr\"ufer domain.

\end{enumerate}

\end{coor}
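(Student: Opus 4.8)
The plan is to read the Corollary as the ``global'' version of Proposition \ref{star-canc}, obtained by quantifying over all $F \in \f(D)$, and then to fold in condition (iv) by means of the Nagata-ring criterion already recorded in the proof of that proposition.

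First I would unwind the three conditions (i)--(iii) into pointwise statements about finitely generated ideals. Because $D \in \f(D)$ and $\starf \le \star$, one has $D^\star \subseteq D^{\starf} \subseteq D^\star$, so $D^{\starf} = D^\star$; hence $F \in \f(D)$ is $\starf$--invertible precisely when $(FF^{-1})^{\starf} = D^\star$. Thus (i), that $D$ is a Pr\"ufer $\star$--multiplication domain, asserts exactly that condition (i) of Proposition \ref{star-canc} holds for \emph{every} $F \in \f(D)$. By definition, (ii) [respectively (iii)] says that every $F \in \f(D)$ is $\start$--\texttt{ab} [respectively $\start$--\texttt{eab}], i.e. that condition (ii) [respectively (iii)] of Proposition \ref{star-canc} holds for every $F$. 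Applying the equivalences (i)$\Leftrightarrow$(ii)$\Leftrightarrow$(iii) of Proposition \ref{star-canc} to each individual $F$ and quantifying over $\f(D)$ yields at once (i)$\Leftrightarrow$(ii)$\Leftrightarrow$(iii) of the Corollary.

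It remains to bring in (iv). Here I would use the characterization exhibited in the proof of Proposition \ref{star-canc} (via \cite[Theorem 2.23]{FP}): for a fixed $F \in \f(D)$, one has $(FF^{-1})^{\starf} = D^\star$ if and only if $F\Na(D,\star)$ is an invertible ideal of $\Na(D,\star)$. Quantifying over all $F$, condition (i) is therefore equivalent to the statement that $F\Na(D,\star)$ is invertible for every $F \in \f(D)$. Since a domain is Pr\"ufer exactly when each of its nonzero finitely generated ideals is invertible, the implication (iv)$\Rightarrow$(i) is immediate, as every extended ideal $F\Na(D,\star)$ is then invertible. For the reverse I would invoke the structure of the Nagata ring: each nonzero finitely generated ideal $J$ of $\Na(D,\star)$ is extended, $J = F\Na(D,\star)$ for some $F \in \f(D)$ (the standard content/Dedekind--Mertens step that collapses a finite generating set into a single content ideal, as in \cite{FL:2003} and \cite{FJS}). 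Granting this, invertibility of all the $F\Na(D,\star)$ forces invertibility of every nonzero finitely generated ideal of $\Na(D,\star)$, so $\Na(D,\star)$ is Pr\"ufer, giving (i)$\Rightarrow$(iv).

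I expect the only genuine obstacle to be this last reduction, namely verifying that finitely generated ideals of $\Na(D,\star)$ are extended from $\f(D)$, so that it suffices to test invertibility on the extended ideals supplied by Proposition \ref{star-canc}. Everything else is a purely formal transcription of the already-proven pointwise proposition. Accordingly I would either cite the extended-ideal property from \cite{FL:2003} and \cite{FJS} directly or, if a self-contained argument is preferred, establish it by the content-combination trick applied to the finite family of generators of $J$.
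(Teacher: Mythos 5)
Your handling of (i)$\Leftrightarrow$(ii)$\Leftrightarrow$(iii) is exactly the paper's proof (the paper simply says these are ``the direct globalizations to all $F \in \f(D)$'' of Proposition \ref{star-canc}), and your (iv)$\Rightarrow$(i) is fine. The gap is precisely where you predicted it, but it is worse than an unverified citation: the claim that \emph{every} nonzero finitely generated ideal of $\Na(D,\star)$ is extended from $\f(D)$ is \emph{false} as an unconditional statement, and neither \cite{FL:2003} nor \cite{FJS} asserts it. Counterexample: take $\star = d$ and $D = k[u,v]$, so $\Na(D,d) = D(X)$ is the classical Nagata ring, and let $f := u + vX$. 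If $fD(X) = E\,D(X)$ for some $E \in \f(D)$, then $0 \neq E \subseteq fD(X) \cap K$; but an equation $cg = fh$ with $c \in K$ and $\co(g) = D$ forces (since $f$ is primitive and irreducible, by Gauss's lemma) $f \mid g$ in $D[X]$ unless $c = 0$, and $f \mid g$ would give $\co(g) \subseteq (u,v)$. Hence $fD(X) \cap K = 0$ and $fD(X)$ is not extended. Your fallback, the content-combination trick, cannot close this either: setting $f := \sum_i X^{N_i} f_i$ with large gaps yields only the sandwich $f\,\Na(D,\star) \subseteq J \subseteq \co(f)\Na(D,\star)$ with $\co(f) = \sum_i \co(f_i)$, and collapsing it to $f\,\Na(D,\star) = \co(f)\Na(D,\star)$ is equivalent to $\co(f)$ being locally principal at the quasi-$\starf$-maximal ideals --- which is exactly the Pr\"ufer $\star$--multiplication property (i) you are trying to prove. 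So as a standing lemma the extendedness claim is circular, and as a general fact it fails.

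The repair stays within your framework and is in effect what the paper's one-line remark intends when it reads (iv) as the globalization of (i$^{\prime\prime}$) to all $F_1 \in \f(\Na(D,\star))$: argue locally instead of through extendedness. Assume (i). By (i$^\prime$) in the proof of Proposition \ref{star-canc}, $F_Q$ is principal for every $F \in \f(D)$ and every $Q \in \QMax^{\starf}(D)$; since every finitely generated ideal of $D_Q$ is extended from a finitely generated ideal of $D$, each such $D_Q$ is a valuation domain. By \cite[Proposition 3.1]{FL:2003}, $\Max(\Na(D,\star)) = \{Q\Na(D,\star) \mid Q \in \QMax^{\starf}(D)\}$ and $\Na(D,\star)_{Q\Na(D,\star)} = D_Q(X)$, which is a valuation domain whenever $D_Q$ is (for $g \in D_Q[X]$ one has $g = c\,u$ with $c$ generating $\co(g)D_Q$ and $u$ a unit of $D_Q(X)$). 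Hence $\Na(D,\star)$ is locally a valuation domain at each maximal ideal, i.e., Pr\"ufer, proving (i)$\Rightarrow$(iv) with no appeal to extendedness. (The same local computation then shows, \emph{a posteriori}, that under (i) every finitely generated ideal of $\Na(D,\star)$ does equal $\left(\sum_i \co(f_i)\right)\Na(D,\star)$; the extendedness you wanted is a consequence of (i), not an input available in advance.)
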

\begin{proof}  (i), (ii) and (iii) are the direct globalizations to all $F \in \f(D)$ of the corresponding properties of Proposition \ref{star-canc}. (iv) is equivalent to the globalization of   (i$^{{\prime}{\prime}}$) to all $F_1 \in \f(\Na(D, \star))$.
\end{proof}

\begin{reem} \bf (1) \rm Note that, even for a star operation (of finite type) $\ast$, the notions of $\astf$--\texttt{ab} and ${\astt}$--\texttt{ab} do not coincide.
For instance, take $\ast$ equal to the $b$--operation on an integrally closed non-Pr\"ufer domain $D$, then clearly $b_{\!_{f}} = b$ and $\widetilde{b} = d$.  Moreover, $b$ is an \texttt{ab}--operation for every domain $D$, but $d$ is not an \texttt{ab}--operation if $D$ is not Pr\"ufer. 
In particular,  the previous example shows  that there exist star operations (of finite type) $\ast$ and nonzero finitely generated ideals that are  $\astt$--\texttt{ab} but not ${\astf}$--\texttt{ab}.  

 \bf (2) \rm From the previous observation and from Corollary \ref{P*MD}, we also deduce that the notions of Pr\"ufer $b$--multiplication domain and Pr\"ufer domain coincide.
 
 \bf (3) \rm  Note that if $\star_1$ and $\star_2$ are two semistar operations on an integral domain and if $\star_1 \leq \star_2$, then in general there are no relations between the notions of $\star_1$--\texttt{eab} (respectively, $\star_1$--\texttt{ab}) ideal and  
 $\star_2$--\texttt{eab} [respectively, $\star_2$--\texttt{ab}] ideal. 
 
 For instance, let $K$ be a field and $X$ and $Y$ two indeterminates over $K$. Set $D:=K[X,Y]_{(X,Y)}$ and $N:=(X,Y)D$. Consider on $D$ the (semi)star operation $\star$ (of finite type) defined in \cite[Example 5.3]{FL:2003}.
  In this case, $\left(\widetilde{\star}\right)_{\!_{a}}= b \leq \star \leq \stara= t$.  (The only fact not already explicitly proved in  \cite[Example 5.3]{FL:2003} is that  $b \leq \star$, but this follows from examining each type of ideal occurring in the set of ideals ``generating''   $\star$  and from observing that $N^b= N$, because there is always a valuation overring of $D$ centered on $N$.) 
  So, obviously, every ideal of $D$ is $b$--\texttt{ab} and $t$--\texttt{ab}, but for instance $N$ is not $\star$--\texttt{ab}, since by definition $(N^k)^\star = N^\star = N$, for all $k \geq 1$. 
  
  On the other hand, in general, we know that, given  $\star_1$ and $\star_2$   two semistar operations on an integral domain with  $\star_1 \leq \star_2$ and  $F\in\f(D)$, then $F$ is $\widetilde{\ \star_1}$--\texttt{ab} implies that $F$ is  $\widetilde{\ \star_2}$--\texttt{ab} 
  \cite[Corollary 5.2(1)]{FL:2007}.
 \end{reem}

\section{Examples}

Now we proceed to the promised examples.

\begin{exxe} \label{notcompleteW}  \sl An example of a $\calbW$--operation which is not of finite type (and so it is not a complete $\calbW$--operation).  \rm 

We say that a domain $D$ is an \emph{almost Dedekind domain} if $D_M$ is a DVR for each maximal ideal $M$ of $D$.  Let $D$ be an almost Dedekind domain with the property that each maximal ideal is finitely generated except for one.  Let $M_\infty$ be the one maximal ideal of $D$ which is not finitely generated.   Explicit examples   of such domains can be found for instance in  \cite[Example 2]{G:1966}, \cite[Example 30]{L:1995}, \cite[6.10]{L:1997} or \cite{L:2006}.   Let $\Delta := \Max(D) \setminus \{M_{\infty}\}$ and let $\calbW :=\{D_M \mid M \in \Delta\}$ and set $\star := \wedge_{\calbW}$. In this case $D^\star = \bigcap \{D_M \mid M \in \Delta\} =  D$   \cite[Example 2]{G:1966}   and so $\star$ is a (semi)star operation of $D$.  

 Since $D$ is a Pr\"{u}fer domain, each nonzero finitely generated ideal $F$ is invertible.    Moreover,  any invertible ideal is necessarily a $v$--ideal and so, in particular, $F^\star = F$ for each $F \in \f(D)$.   If follows that $\starf$ is the identity operation of $D$.  However, it is clear from the definition that $(M_\infty)^\star = D$.  Hence, $\star$ is not of finite type.  
\end{exxe}

For the next example,  we note that \cite[Proposition 32.4]{G} provides a way of producing star operations given a collection of ideals.  In particular, we begin with a collection $\calS$ of fractional ideals of $D$ which contains all of the principal fractional ideals and satisfies the condition that if $J \in \calS$ and $\alpha D$ is a principal fractional ideal of $D$ then $\alpha J \in \calS$.  We then define the star operation $\ast$ of $D$ (depending on $\calS$)  by saying that, for each $E \in \F(D)$,
$$
E^{\ast} := \bigcap \{J  \mid J \in \calS \mbox{ and } E \subseteq J \}\,.
$$

\begin{exxe}  \sl An example of an \texttt{ab} star operation which is not a (star) $\calbW$--operation. \rm

As in the previous example, we let $D$ be an almost Dedekind domain with exactly one maximal ideal $M_\infty$ which is not finitely generated.  To define the required star operation,  we give a generating collection of ideals as in the comments above.  In particular, we let $\calS$ consist of all fractional invertible  ideals and all ideals of the form $JM_\infty$ where $J$ is fractional invertible.   As recalled above,  \cite[Proposition 32.4]{G} guarantees that this collection will generate a star operation $\ast$ of $D$ and it is well known that any star operation on a Pr\"{u}fer domain is an \texttt{ab} operation.    Finally, we note that $M_\infty^2$ cannot be written as an intersection of ideals in $\calS$ (in fact, $M_\infty^2$ is only contained in the ideal $M_\infty$ among the ideals belonging to $\calS$), thus $(M_\infty^2)^\ast = M_\infty$.  This proves that 
$\ast$ cannot be a $\calbW$--operation.   As a matter of fact, clearly,   each valuation overring of $D$ is of the form $D_N$ for some maximal ideal $N$ of $D$.  If $\ast$ were a $\calbW$--operation for some family of valuation overrings $\calbW$ of $D$, then either $D_{M_\infty}$ would be included in $\calbW$ or not.  If it were included, then we would have $(M_\infty^2)^\ast = M_\infty^2$  and, if it were not included, then we would have 
$(M_\infty)^\ast = D$,  as in the previous example.  Both of these possibilities fail in the current example.  It follows that $\ast$ cannot be a $\calbW$--operation.  
\end{exxe}

Finally, we give the example which motivated the paper.

\begin{exxe} \label{ex:1.8} \sl Example of an \texttt{eab} star operation that   is not an \texttt{ab} star operation. \rm

  Let $k$ be a field, let $X_1, X_2, X_n, ...$  be an infinite set of indeterminates over $k$ and let $N:=(X_1, X_2, X_n, ...)k[X_1, X_2, X_n, ...]$. Clearly, $N$ is a maximal ideal in $k[X_1, X_2, X_n, ...]$. Set  $D := k[X_1, X_2, X_n, ...]_N$, let  $M: =ND$  be the maximal ideal of the local domain $D$ and let $K$ be the quotient field of $D$.
  
 Note that $D$ is an UFD and consider $\boldsymbol{\calbW}$ the set of all the rank one valuation overrings of $D$. Let $\wedge_{\calbW}$ be the star \texttt{ab} operation on $D$ defined by $\calbW$. It is well known that the $t$--operation  on $D$ is an \texttt{ab} star operation, since $t\!\!\mid_{\boldsymbol{f}(D)} = 
 \wedge_{\calbW}\!\!\mid_{\boldsymbol{f}(D)}$ \cite[Proposition 44.13]{G}.
  
  We consider the following subset of fractionary ideals of $D$:
  $$  {\calJ}:= \{ xF^{t}, yM, zM^2 \mid x, y, z \in K \setminus \{0\}, \;   F\in \boldsymbol{f}(D)\}\,.
  $$
 Since each nonzero principal fractional ideal  of $D$ is in $ {\calJ}$ and, for each ideal $J \in  {\calJ}$ and for each  nonzero $a \in K$, the ideal $aJ$ belongs to ${\calJ}$, then, as above, \cite[Proposition 32.4]{G}, guarantees that the   set $ {\calJ}$ defines on $D$ a star operation $\ast$.
Since, for each $F \in {\boldsymbol{f}(D)}$,  $F^{t} \in {\calJ}$, then  $\ast\!\!\mid_{\boldsymbol{f}(D)}= t\!\!\mid_{\boldsymbol{f}(D)}$ and so $\ast$ is an \texttt{eab} operation on $D$, since $t$ is an \texttt{(e)ab} 
star operation on $D$. Note that $(X_1, X_2)M \subset M^2$ and  $(M^2)^\ast = M^2$, because  $M^2\in  {\calJ}$.
  
   We claim that:
  $$
  ((X_1, X_2)M)^\ast = ((X_1, X_2))^{t} \cap M^2 =   M^2 = ((X_1, X_2)M^2)^\ast\,.$$  
  
  As a matter of fact,  if $(X_1, X_2)M \subseteq G^{t}$ for some $G \in \boldsymbol{f}(D)$, then  we have \ 
  $((X_1, X_2)D)^{t}M^{t} \subseteq G^{t}$, with $((X_1, X_2)D)^{t} =M^{t} =D$, since $X_1$ and $X_2$ are coprime \ec~in $D$ and so $(X_1, X_2)D$ is not contained in any proper principal ideal of $D$.  Therefore $(X_1, X_2)M$ is not contained in any nontrivial ideal of the type
  $xF^{t} \ (=G^{t} )\in {\calJ}$.
  
   A similar argument shows that $(X_1, X_2)M$ is neither contained in any ideal of the type
  $yM, zM^2\in  {\calJ}$, with $y$ and $z$ nonzero and non unit in $D$,  and thus  the only nontrivial ideals of $ {\calJ}$ containing $(X_1, X_2)M$ are $M^2$ and $M$, hence $
  ((X_1, X_2)M)^\ast = M^2$.  A similar argument shows  that $((X_1, X_2)M^2)^\ast= M^2 $.
  
Since  $ ((X_1, X_2)M)^\ast= M^2= ((X_1, X_2)M^2)^\ast$,  if \  $\ast$ \ were an \texttt{ab} star operation, then we would deduce that $M^\star  \  (=M)$ is equal to $(M^2)^\ast \ (=M^2)$, which is not the case.
  \end{exxe}


\section{Generalization:  a conjecture }

Given an  {\texttt{eab}}  semistar operation $\star$ on a domain $D$, we introduced in Section 2 several natural means to associate a new {\texttt{eab}}  semistar operation to $\star$.  In \cite{FL:2007} we introduced a ring construction   KN$(D, \star)$ which simultaneously generalizes the notions of Kronecker function ring and Nagata ring, for  an arbitrarily given semistar operation $\star$ on any domain $D$.
 Along with this generalized function ring, we introduced a semistar operation $ {\star_{_{\! \ell}} }$ which is a semistar operation on $D$.      What is noteworthy about this is that $ {\star_{_{\! \ell}} }$ possesses at least two different interpretations that seem to be natural generalizations of the constructions giving rise to  the semistar operations ${\circlearrowleft_{\mbox{\tiny\rm  Kr}}}$ and  $b(\star)$ (both coinciding with $\starf$, when $\star$ is {\texttt{eab}}).    On the other hand, $\,\star_{_{\! f}}$ and $ {\star_{_{\! \ell}} }$ can be dramatically different for a given semistar operation $\star$ which is not  {\texttt{eab}}.     What seems plausible then is that we can unify the theory of 
$ {\star_{_{\! \ell}} }$ and the theory developed in this paper regarding  {\texttt{eab}} operations of finite type if we can give a construction for $ {\star_{_{\! \ell}} }$ which agrees with $\,\star_{_{\! f}}$ in the case where $\star$ is  {\texttt{eab}}. We have a candidate for such a construction which seems plausible, but at this time do not have a proof.  We start by giving a brief summary of results from \cite{FL:2007}.

\medskip

Let $D$ be a domain  with quotient field $K$  and let $\star$ be a semistar operation on $D$.  We call an overring $T$ of $D$ a \emph{$\star$--monolocality of } $D$ provided 
 $T^{\star_{_{\! f}}} = T$   and $FT$ is a principal fractional ideal of $T$, for each $\star$--{\texttt{eab}} $F \in \f(D)$.    Let $\boldsymbol{\mathcal{L}}(D,\star)$ be the set of   all 
$\star$--monolocalities on $D$.    We can then define the new semistar operation 
$ {\star_{_{\! \ell}} }$ on $D$   by setting, for each  $E \in {\overline{\boldsymbol{F}}}(D)$, 
$$
  E^{\star_\ell} := \bigcap \{ ET  \mid T \in \boldsymbol{\mathcal{L}}(D,\star) \}\,.
 $$
 {\rc In particular, since a finitely generated ideal extends to a principal ideal in a valuation overring,   we have  $\calbV(\star) \ (=\! \{ V \mid V  \mbox{ is a $\star$--valuation  overring of $D$}\}) \subseteq \boldsymbol{\mathcal{L}}(D,\star) $}. 
 Therefore,  $\star_\ell  \leq b(\star) $.

 \begin{reem} \rm \bf (1) \rm  Note that, for any semistar operation $\star$,  it is known that ${\star_{_{\! \ell}} } \leq \starf$  \cite[Proposition 6.3]{FL:2007}  and this inequality  is  stronger than ${\star_{_{\! \ell}} } \leq b(\star)$, since by definition of $\star$--valuation overring it follows immediately that $\starf \leq b(\star)$.
 
 \bf (2)  \rm Since each $\star$--monolocality contains a minimal $\star$--monolocality  \cite[Proposition 5.11(7)]{FL:2007}, if we denote by $ \boldsymbol{\mathcal L}(D, \star)_{\mbox{\tiny min}}$, or simply by   ${\boldsymbol{\mathcal L}}_{\mbox{\tiny min}}$, the set of all minimal $\star$--monolocalities of $D$, then  $
  E^{\star_\ell} = \bigcap \{ ET  \mid T \in{\boldsymbol{\mathcal L}}_{\mbox{\tiny min}}  \}
 $,  for each  $E \in {\overline{\boldsymbol{F}}}(D)$. 

 \end{reem}
 
If we  define the domain KN$(D,\star)$ to be the subring of the field of rational functions $K(X)$ given by 
$$
\mbox{KN}(D, \star):= 
 \left\{ \frac{f}{g} \mid f, g\in D[X],\ g \neq 0,\ \co(f)^\star \subseteq \co(g)^\star, \mbox{ and} \  \co(g) \mbox{ is }  \star\mbox{--{\texttt{eab}}} \right\}\,,  $$
then we know that this ring generalizes  both the classical Kronecker function ring construction (the case where $\star = \star_a$, i.e., $\mbox{KN}(D, \star_a) = \mbox{Kr}(D, \star_a)= \mbox{Kr}(D, \star)$  \cite[Proposition 5.4(2) and Theorem 5.11(7)]{FL:2007})  and the Nagata ring construction (the case where 
$\star = \widetilde{\star}$, i.e.,  $\mbox{KN}(D, \start) = \mbox{Na}(D, \start)=  \mbox{Na}(D, \star)$  \cite[Proposition 5.4(1) and Theorem 5.1(7)]{FL:2007}). 

As we did for ${\circlearrowleft_{\mbox{\tiny\rm  Kr}}}$, we can then define a new semistar operation ${\circlearrowleft_{\mbox{\tiny\rm  KN}}}$ on $D$ using the previous construction as follows, for each  $E \in {\overline{\boldsymbol{F}}}(D)$, 
$$
 E^{\circlearrowleft_{\mbox{\tiny\rm  KN}}} := E(\mbox{KN}(D,\star)) \cap K\,.
 $$

  Since $b(\star) ={\circlearrowleft_{\mbox{\tiny\rm  Kr}}}$ and  ${\star_{_{\! \ell}} }$  is a generalization   of $b(\star)$,  the key point of this speculative section is then made clear by the following result. 
  
  \begin{thee}{ \rm  \cite[Propositions 5.1, 5.11(7), and  6.3]{FL:2007}}  Let $D$ be a domain and let $\star$ be a semistar operation on $D$.  Then,
  \begin{enumerate}
  \item[(1)] $ \mbox{\rm KN}(D, \star) =  \bigcap \{ T(X)  \mid T \in \mathcal{L}(D,\star) \}$.

\item[(2)] $ {\star_{_{\! \ell}} } = {\circlearrowleft_{\mbox{\tiny\rm  KN}}}$.

\end{enumerate}
\end{thee}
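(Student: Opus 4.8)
The statement is quoted from \cite{FL:2007}, so the plan is to reconstruct both assertions from the defining properties of $\mathcal{L}(D,\star)$ and of $\mbox{KN}(D,\star)$, the backbone being the content-ideal description of the Nagata ring of a monolocality. Specifically, I would first record the following standard facts about Nagata rings, for an overring $T$ of $D$ with $T^{\starf}=T$: the ring $T(X)$ equals $\Na(T,d)$, one has $f/g\in T(X)$ (for $f,g\in D[X]$, $g\neq 0$) if and only if $\co(f)T\subseteq \co(g)T$, and the module-contraction identity $MT(X)\cap K=M$ holds for every $T$--submodule $M$ of $K$. These reduce part (1) to a statement about content ideals in the monolocalities.

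For the inclusion $\mbox{KN}(D,\star)\subseteq \bigcap_{T}T(X)$, I would fix $f/g\in\mbox{KN}(D,\star)$ and $T\in\mathcal{L}(D,\star)$. By definition $\co(g)$ is $\star$--\texttt{eab}, so $\co(g)T$ is a nonzero principal, hence invertible, ideal of $T$. The key structural input, which I would isolate as a lemma, is that for such $B\in\f(D)$ with $BT$ invertible the defining property $T^{\starf}=T$ forces $B^{\star}T=BT$. Granting this, from $\co(f)^\star\subseteq\co(g)^\star$ I get $(\co(f)\co(g))^\star\subseteq(\co(g)^2)^\star$ (using the standard identity $(E^\star F)^\star=(EF)^\star$), and extending to $T$ via the lemma yields $\co(f)\co(g)T\subseteq\co(g)^2T$; cancelling the invertible ideal $\co(g)T$ gives $\co(f)T\subseteq\co(g)T$, i.e. $f/g\in T(X)$. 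For the reverse inclusion I would exploit that every $\star$--valuation overring is a monolocality, i.e. $\calbV(\star)\subseteq\mathcal{L}(D,\star)$ as already noted in the text, so that $\bigcap_{T}T(X)$ is refined enough to detect both content conditions defining $\mbox{KN}(D,\star)$: membership in all $T(X)$ gives $\co(f)T\subseteq\co(g)T$ for every $T$, and testing against the $\star$--valuation overrings recovers $\co(f)^\star\subseteq\co(g)^\star$, while the minimal monolocalities of \cite[Proposition 5.11(7)]{FL:2007} force $\co(g)$ to be $\star$--\texttt{eab}.

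Once (1) is available, part (2) is formal. For each $E\in\overline{\boldsymbol{F}}(D)$ and each $T\in\mathcal{L}(D,\star)$ the contraction identity gives $ET(X)\cap K=ET$. When $E=F\in\f(D)$, extension commutes with the intersection over the monolocalities, so $F^{\circlearrowleft_{\mbox{\tiny\rm KN}}}=F\,\mbox{KN}(D,\star)\cap K=\bigcap_{T}\bigl(FT(X)\cap K\bigr)=\bigcap_{T}FT=F^{{\star_{_{\! \ell}} }}$. Passing to an arbitrary $E$ by writing it as the directed union of its finitely generated $D$--submodules and using that both ${\circlearrowleft_{\mbox{\tiny\rm KN}}}$ and ${\star_{_{\! \ell}} }$ are of finite type then finishes the proof.

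The main obstacle is the structural lemma $\co(g)^{\star}T=\co(g)T$ for $\star$--\texttt{eab} contents, on which the forward inclusion of (1) rests; this is precisely where the condition $T^{\starf}=T$ defining a monolocality must be used, since without invertibility of $\co(g)T$ the cancellation step fails. A secondary technical point is justifying the interchange of extension with the infinite intersection when passing from finitely generated to arbitrary modules in (2); this is controlled by the finite-type character of the two operations and by the good module-theoretic behaviour of $\mbox{KN}(D,\star)$ as an intersection of Nagata rings.
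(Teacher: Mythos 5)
Your forward inclusion for (1) is essentially sound, and your structural lemma is both correct and exactly where the condition $T^{\starf}=T$ enters: if $\co(g)$ is $\star$--{\texttt{eab}} and $T\in\mathcal{L}(D,\star)$, then $\co(g)T=cT$ is principal, $c^{-1}\co(g)\subseteq T$ is finitely generated, so $(c^{-1}\co(g))^{\star}\subseteq T^{\starf}=T$, i.e.\ $\co(g)^{\star}T=\co(g)T$ (your detour through $(\co(f)\co(g))^{\star}\subseteq(\co(g)^{2})^{\star}$ is unnecessary: directly $\co(f)T\subseteq\co(f)^{\star}T\subseteq\co(g)^{\star}T=\co(g)T$). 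But your ``standard fact'' that $f/g\in T(X)$ if and only if $\co(f)T\subseteq\co(g)T$ is false for a general overring $T$: only the ``only if'' direction holds (via Dedekind--Mertens), while the ``if'' direction needs $\co(g)T$ principal. For instance, with $T=k[x,y]_{(x,y)}$, $f=x+yX$, $g=y+xX$, the contents are equal, yet $y+xX$ is prime in $T[X]$ and any representation $f/g=p/q$ forces $\co(q)\subseteq(x,y)T$, so $f/g\notin T(X)$. You are rescued in the forward direction only because $\star$--{\texttt{eab}}-ness of $\co(g)$ makes $\co(g)T$ principal; note also that the paper itself gives no proof of this theorem --- it cites \cite[Propositions 5.1, 5.11(7), 6.3]{FL:2007} --- so what follows compares your sketch with the mechanism of that source.

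The genuine gaps are in the reverse inclusion of (1) and in (2). Testing against $\calbV(\star)$ does not recover $\star$--containments: from $\co(f)V\subseteq\co(g)V$ for all $V\in\calbV(\star)$ you obtain only $\co(f)\subseteq\co(g)^{b(\star)}$, and for non-{\texttt{eab}} $\star$ this is far weaker than $\co(f)^{\star}\subseteq\co(g)^{\star}$; e.g.\ for $\star=d$ on a non-Pr\"ufer domain every valuation overring is a $d$--valuation overring, so your test yields only $b$--containments, whereas $\mbox{\rm KN}(D,d)=\Na(D,d)=D(X)$ demands $\co(f)T\subseteq\co(g)T$ at every $D_M$ together with $\co(g)$ invertible. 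You give no actual argument that $\co(g)$ is $\star$--{\texttt{eab}} (``the minimal monolocalities force'' is a gesture), and membership in $\mbox{\rm KN}(D,\star)$ is representation-dependent, so one must produce \emph{some} representation with both defining properties, not test a given one. In (2), the interchange of extension with the infinite intersection is precisely the nontrivial inclusion $E^{\star_{_{\! \ell}}}\subseteq E\,\mbox{\rm KN}(D,\star)\cap K$ (the opposite inclusion is formal, since $\mbox{\rm KN}(D,\star)\subseteq T(X)$ and $ET(X)\cap K=ET$), and your closing appeal to $\star_{_{\! \ell}}$ being of finite type is circular: intersections of extensions are typically \emph{not} of finite type --- see Example \ref{notcompleteW} of this very paper --- and finite-type-ness of $\star_{_{\! \ell}}$ is a consequence of (2), whereas ${\circlearrowleft_{\mbox{\tiny\rm KN}}}$ is automatically of finite type because contraction commutes with directed unions. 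The missing engine, supplied by the cited Proposition 5.11(7) of \cite{FL:2007}, is the localization structure of $\mbox{\rm KN}(D,\star)$: for each maximal ideal $M$, the ring $T_M:=\mbox{\rm KN}(D,\star)_M\cap K$ is a (minimal) $\star$--monolocality with $\mbox{\rm KN}(D,\star)_M=T_M(X)$. Granting this, $\mbox{\rm KN}(D,\star)=\bigcap_M\mbox{\rm KN}(D,\star)_M=\bigcap_M T_M(X)\supseteq\bigcap\{T(X)\mid T\in\mathcal{L}(D,\star)\}$ settles the reverse half of (1), and for arbitrary $E\in\overline{\boldsymbol{F}}(D)$ one gets $E\,\mbox{\rm KN}(D,\star)\cap K=\bigcap_M E\,\mbox{\rm KN}(D,\star)_M\cap K=\bigcap_M ET_M=E^{\star_{_{\! \ell}}}$, the last equality because the minimal monolocalities already compute $\star_{_{\! \ell}}$; this yields (2) in one stroke, with no interchange lemma and no finite-type hypothesis.
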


%
%


%

%


%

%

%




%
\ec
\medskip

As noted earlier, what is needed now to unify the theory is a construction which begins with a semistar operation $\star$ and yields ${\star_{_{\! \ell}} }$ in the general setting, but obviously yields $ \star_{_{\! f}} = b(\star)$ in the special case where $\star$ is an {\texttt{eab}} operation  (recall that, if $\star$ is an {\texttt{eab}} semistar operation, then any finitely generated ideal is an {\texttt{eab}} ideal  and so $\mbox{KN}(D, \star) = \mbox{Kr}(D, \star)$), i.e., ${\star_{_{\! \ell}} } =\stara \ (= b(\star) =\starf$)). 

 Let $\f_{\!\flat}(D)$ be the set of all nonzero (finitely generated) $\star$--{\texttt{eab}}  fractional ideals of $D$.
  For each  $E \in {\overline{\boldsymbol{F}}}(D)$, we then define
  $$E^{\star_{{\flat}}} := \bigcup \{ F^\star  \mid F \subseteq E \mbox{ and } F \in\f_\flat(D)
   \}\,.$$
   
   Obviously, $E^{\star_{{\flat}}} \subseteq E^{\starf}$  and   if $\star$ is an {\texttt{eab}} semistar operation, then 
  $\f_{\!\flat}(D) = \f(D)$ and so $\star_{_{\flat}} = \starf$  Note also that, $d_{_{\flat}} = d$.   
  
  \medskip
 
   In general, it is not clear that $E^{\star_{{\flat}}}$ is even an ideal.  So the proposed new semistar operation  would be defined using the ideal generated by the set $E^{\star_{{\flat}}}$.  The reason that such a definition seems reasonable can be seen if one considers how an ideal of $D$ gets larger when one extends to the ring 
  $ \mbox{\rm KN}(D, \star) $ and then contracts to K.  Suppose then that $J$ is an ideal of a domain $D$ and that $\star$ is a semistar operation on $D$.  Suppose also that $I$ is a $\star$--{\texttt{eab}} ideal of $D$ such that $I \subseteq J$.  Let 
 $\{a_0, a_1, \ldots , a_n\}$ be a set of generators of $I$ and let $d \in I^\star$.  Let 
 $f(X) := a_nX^n + \ldots + a_1X + a_0$.  Then by definition $\frac{d}{f(X)} \in  \mbox{\rm KN}(D, \star)$.  It follows that 
 $d \in J^{\star_{_{\! \ell}} }$.  It remains to be proven that $J^{\star_{_{\! \ell}} }$ can be generated by such elements.

   We conclude with the following. 
   \medskip
   
   \noindent \bf Conjecture. \sl Let $D$ be a domain and let $\star$ be a semistar operation on $D$.  Then, $\star_{_{\flat}}$,
as defined above, is a semistar operation on $D$ and
 is actually equal to ${\star_{_{\! \ell}} }$. \rm  

\ec


\end{document}